\newtheorem{theorem}{Theorem}[section]
\newtheorem{proposition}[theorem]{Proposition}
\newtheorem{corollary}[theorem]{Corollary}
\newtheorem{lemma}[theorem]{Lemma}
\newtheorem{example}[theorem]{Example}
\theoremstyle{definition}
\newtheorem{remark}[theorem]{Remark}
\newcommand{\Z}{\mathbb{Z}}
\newcommand{\Q}{\mathbb{Q}}
\newcommand{\R}{\mathbb{R}}
\newcommand{\C}{\mathbb{C}}
\newcommand{\OO}{\mathcal{O}}
\newcommand{\F}{\mathcal{F}}
\newcommand{\G}{\mathcal{G}}
\newcommand{\T}{T}
\newcommand{\lin}{\mathrm{lin}}
\newcommand{\Diff}{\mathrm{Diff}}
\newcommand{\Tang}{\mathrm{Tang}}
\newcommand{\Dom}{D}
\newcommand{\Disc}{\Delta}
\newcommand{\Mon}{\mathrm{Mon}}
\date{\today}
\title[Mattei-Moussu's Theorem]{Pairs of foliations and Mattei-Moussu's Theorem.}
\author[A. A. Diaw]{Adjaratou Arame Diaw}
\address{Univ Rennes, CNRS, IRMAR - UMR 6625, F-35000 Rennes, France}
\email{aramdiaw2@gmail.com}
\author[F. Loray]{Frank Loray}
\address{Univ Rennes, CNRS, IRMAR - UMR 6625, F-35000 Rennes, France}
\email{frank.loray@univ-rennes1.fr}
\dedicatory{\`A J.-F. Mattei, J. Martinet, J.-P. Ramis et R. Moussu.}
\keywords{Foliations, Web, Holonomy, Reduction of singularities}
\subjclass[2010]{32M25, 32S65, 34C20, 34M35}
\begin{document}

\begin{abstract}
We prove a reduction of singularities for pairs of foliations by blowing-up, and then investigate
the analytic classification of the reduced models. Those reduced pairs of regular foliations are 
well understood. The case of a regular and a singular foliation is dealt with Mattei-Moussu's Theorem
for which we provide a new proof, avoiding Gronwall's inequality. We end-up announcing results recently
obtained by the first author in the case of a pair of reduced foliations sharing the same separatrices.
\end{abstract}
  
\thanks{We warmly thank Jorge Vit\'orio Pereira and Fr\'ed\'eric Touzet for useful discussions.
This work is supported by ANR-16-CE40-0008 ``Foliage'' grant,
CAPES-COFECUB Ma 932/19 project, Universit\'e de Rennes 1 and Centre Henri Lebesgue CHL}
\maketitle

\section{Introduction}

A singular holomorphic foliation by curves on a surface is locally given by a holomorphic vector field
with isolated zeroes. Outside of the zero set, the complex integral curves of the vector field $v$ define 
a regular holomorphic foliation by curves $\F$. Another vector field will define the same foliation
if, and only if, it takes the form $f\cdot v$ for a non vanishing holomorphic function $f$.

In \cite{MatteiMoussu}, Mattei and Moussu provide a topological characterization of singular points
of holomorphic foliations that admit a non constant holomorphic first integral, i.e. such that the leaves 
are locally defined as the level curves of a holomorphic function $h$. For this, they use the reduction 
of singularities by blow-up, and they provide a careful study of the reduced singular points. In particular,
they prove that the saddle singular points are determined by their eigenvalues and holonomy map
(see Theorem \ref{thm:MM}, and also \cite{EI}). 
The proof of this famous result is in two parts. They consider two saddles with the same eigenvalues
$\{\lambda_1,\lambda_2\}$ and assume that the holomomies of the separatrices associated to $\lambda_1$ 
say are conjugated.
First they use this conjugacy to construct a conjugacy of the foliations between neighborhoods of annuli
contained into the separatrices; this is done by a simple and standard geometric argument. In a second step,
they prove that the conjugacy extends on a neighborhood of the singular point minus the second separatrix,
by controlling the boundedness by means of Gronwall's Inequality, and extends holomorphically 
along the separatrix by Riemann Extension Theorem. This second part of the proof is delicate and, 
in general, not written in full details. 
Here, we provide in Section \ref{Sec:MM} a complete and elementary proof avoiding Gronwall's Inequality. 
In fact, we compare the analytic conjugacy on the annulus with the formal conjugacy which is transversely formal 
on the disc:  they differ by a transversely formal symmetry of the foliation on the annulus. 
It suffices to prove that all such symmetry extend as a transversely formal symmetry on the disc, 
and this allow to conclude the analytic extension of the conjugacy. Our proof remains valid 
in the saddle-node case when the central manifold (invariant curve tangent to the zero-eigendirection)
is analytic (not only formal) although Gronwall's Inequality cannot be used to conclude in that case.
We hope that this proof can be useful in other situations, like for higher dimensional saddles,
for instance weakening assumptions of Reis' result in  \cite{Reis}.
At least, this approach has been recently used by the first author to classify some natural pairs of singular foliations.
Let us explain.

Mattei-Moussu's Theorem provide fibered conjugacy, that is to say, it can be thought as classification
of pairs $(\F_1,\F_2)$ of a saddle $\F_2$ with a regular foliation $\F_1$ such that one of the separatrices of $\F_2$
is a leaf of $\F_1$. We start a study of pairs of singular foliations by proving a reduction of singularities.
In Theorem \ref{thm:ReductionPair}, we prove that, given a pair of singular foliations on a complex surface $M$,
we can construct a proper map $\pi:\tilde M\to M$ obtained by a finite sequence of blowing-up
such that the singular points of the lifted pair $\pi^*(\F_1,\F_2)$ fits with a list of simple models.
Reduced pairs of regular foliations can be easily handled and are well-known, let us mention 
\cite{OlivierBras} for a recent contribution.
Mattei-Moussu's Theorem and some results of Martinet and Ramis allows us to deal with the analytic
classification of those models where only one of the two foliations is regular. 
In Section \ref{Sec:Pairs}, we state a recent contribution of the first author to the case of two singular foliations:
reduced pairs consist of pairs of reduced singular foliations sharing the same invariant curves.
In \cite{TheseArame,PapierPairFol}, the first author provides a complete analytic classification
in the case the two foliations have a reduced tangency divisor (i.e. without multiplicity)
along the invariant curves (see Theorem \ref{thm:Pair}). All details will be published in the 
forthcoming paper \cite{PapierPairFol}. In the study of reduced pairs, only remains the case
of two reduced singular foliations with higher tangency along the invariant curves. This seems
feasible, but more technical and left to the future.

\section{Singular holomorphic foliations in dimension 2}

A singular holomorphic foliation by curves $\F$ on a complex surface $M$ is defined 
by coherent analytic subsheaf $T_\F \subset TM$ of rank 1 such that the quotient sheaf $TM/ T_\F$
is torsion free. The locus where $TM/ T_\F$ is not locally free is the singular locus, a discrete set.
Locally, sections of $T_\F$ take the form $f\cdot v$ where $v$ is a holomorphic vector field 
with isolated zeroes, and $f$ is any holomorphic function. Any other generating vector field takes the form 
$f\cdot v$ with $f$ non vanishing. Equivalently, one can locally define $T_\F$ as the kernel of a 
holomorphic $1$-form $\omega$ with isolated zeroes, which is also defined up to a non vanishing factor, 
and globally by the conormal bundle $N_\F^*$,
a rank 1 coherent subsheaf of $T_\F^*$ with torsion-free cokernel (see \cite[Chapter 2]{Brunella}).

Let $v$ be a germ of holomorphic vector field at $(\mathbb C^2,0)$:
$$v=a(x,y)\partial_x+b(x,y)\partial_y,\ \ \ a,b\in\C\{x,y\}$$
and we assume $0$ is an isolated singular point of $v$, i.e. the common zero of $a$ and $b$.
The linear part of $v$ is defined as the linear part of the map $V:(x,y)\mapsto(a(x,y),b(x,y))$, 
i.e. its differential $D_0V$ at $0$; we denote by $\lin(v)$ this linear map. If $\Phi\in\Diff(\C,0)$ is a 
coordinate change, then we have 
$$\lin\left(\Phi^*v)\right)=D_0\Phi^{-1}\cdot \lin(v) \cdot D_0\Phi.$$
%where $D_0\Phi$ is the differential of $\Phi$ at $0$. 
Therefore, the eigenvalues $\{\lambda_1,\lambda_2\}$
of $\lin(v)$ are invariant under change of coordinates. If we are interested in the foliation $\F$
defined by $v$, then we note that 
$$\lin(f\cdot v)=f(0)\cdot\lin(v)$$
so that only the ratio $\lambda=\lambda_2/\lambda_1$ (when $\lambda_1\not=0$ say)
is well defined by the foliation.
When $\lambda\not\in\R_{\le0}$, then Poincar\'e and Dulac proved that we can reduce 
the foliation to the unique normal form 
\begin{equation}\label{eq:PoincareDulac}
x\partial_x+\lambda y\partial_y,\ \ \ \text{or}\ \ \ x\partial_x+(ny+x^n)\partial_y
\end{equation}
by an analytic change of coordinate; the latter case occur when $\lambda$ or $\frac{1}{\lambda}=n\in\Z_{>0}$.
When $\lambda<0$, we are in the saddle case. Briot and Bouquet proved that there are two invariant curves, 
one in each eigendirection, so that after straigthening them on coordinates axis, we get a preliminary normalization
\begin{equation}\label{eq:BriotBouquet}
x\partial_x+(\lambda+f(x,y)) y\partial_y
\end{equation}
where $f$ is a function vanishing at $0$. Poincar\'e and Dulac proved that we can reduce 
the foliation to the unique normal form by a formal change of coordinate:
\begin{equation}\label{eq:PoincareDulacSaddle}
x\partial_x+\lambda y\partial_y,\ \ \ \text{or}\ \ \ x\partial_x+\left(-\frac{p}{q}+u^k+\alpha u^{2k}\right)y\partial_y\ \ \ 
\text{with}\ \left\{\begin{matrix}
p,q,k\in\Z_{>0}\\ u=x^py^q,\\ \alpha\in\C
\end{matrix}\right.
\end{equation}
The latter case, called resonant saddle, occurs for generic $f$ when $\lambda\in\Q_{<0}$.
We will provide below a proof of this formal reduction.  It is known however that this formal normalization 
is divergent in general, and the analytic classification fails to be finite dimensional in this case. 
In fact, the classification of saddles up to analytic conjugacy
is equivalent to the classification of germs of diffeomorphisms $\Diff(\C,0)$ up to analytic conjugacy.
Before explaining this, let us just end our review of non degenerate singular points by the saddle-node case
$\lambda=0$ which admits a normal form similar to (\ref{eq:PoincareDulacSaddle}) with $(p,q)=0$ and $u=y$:
\begin{equation}\label{eq:PoincareDulacSaddleNode}
x\partial_x+\left(y^{k+1}+\alpha y^{2k+1}\right)\partial_y\ \ \ 
\text{with}\ \left\{\begin{matrix}
k\in\Z_{>0}\\  \alpha\in\C
\end{matrix}\right.
\end{equation}
In order to explain this, let us recall the construction of the holonomy.

\subsection{Holonomy}\label{sec:Hol}

Fix a disc $\Disc=\{(x,0)\ ;\ \vert x\vert<r\}\subset\C^2$ such that the foliation is of the form (\ref{eq:BriotBouquet}),
with $f$ analytic at the neighborhood of $\Disc$, $\lambda+f$ non vanishing on $\Disc$. 
Denote by $\Disc^*$ the punctured disc, where we have deleted 
the singular point of $\F$. Fix a point $p\in\Disc^*$ and choose a minimal first $H(x,y)\in\OO(\C^2,p)$:
locally at $p$, the $1$-form $dH$ is non vanishing and its kernel defines the foliation, i.e.
$dH(v)=v\cdot H=0$. Consider a loop $\gamma:[0,1]\to\Disc^*$  based at $p=\gamma(0)=\gamma(1)$,
and consider the analytic continuation of $H$ along $\gamma$. 

We claim that such a continuation exists because $\gamma$ is contained in a regular leaf $\Disc^*$ of the foliation. 
Indeed, one can cover $\gamma$ by small balls $U_i$
equipped with a local minimal first integral $H_i$ and choose finitely many of them by compacity:
we enumerate such that $\gamma$ successively intersect $U_0,U_1,\ldots,U_n$. 
Denote $V_i=U_i\cap\Disc^*$.
Near overlappings $V_{i}\cap V_{i+1}$, we have $H_{i}=\varphi_{i,i+1}\circ H_{i+1}$
for some $\varphi_{i,i+1}\in\Diff(\C,0)$ because they both define minimal first integrals;
therefore, $\varphi_{i,i+1}\circ H_{i+1}$ defines an analytic extension of $H_i$ 
near $V_{i+1}$. Starting from $(U_0,H_0)=(U,H)$, the analytic continuation $H^\gamma$ 
of $H$ along $\gamma$ is given by:
$$H=\underbrace{H_0}_{V_0}=\underbrace{\varphi_{0,1}\circ H_1}_{V_1}=\underbrace{\varphi_{0,1}\circ\varphi_{1,2}\circ H_2}_{V_2}=\cdots=\underbrace{\varphi_{0,1}\circ\varphi_{1,2}\circ\cdots\circ\varphi_{n,0}\circ H_0}_{\text{back to}\ V_0}=:H^\gamma.$$
By construction, $H^\gamma$ is a minimal first integral of $\F$ near $V=U\cap\Disc^*$, and we have 
$$H^\gamma=\varphi^\gamma\circ H$$
with $\varphi^\gamma\in\Diff(\C,0)$. It is easy to check that $\varphi^\gamma$ only depend on the 
homotopy type of $\gamma$; it is called the monodromy of $H$ along $\gamma$. Moreover, we get a group morphism
$$\Mon(H)\ :\ \pi_1(\Disc^*,p)\to\Diff(\C,0)\ ;\ \gamma\mapsto \varphi^\gamma.$$
Finally, if we start with another minimal first integral $\tilde H=\varphi\circ H$, then the monodromy
is changed by 
$$\tilde H^\gamma=\varphi\circ H^\gamma=\varphi\circ\varphi^\gamma\circ H
=\underbrace{\varphi\circ\varphi^\gamma\circ\varphi^{-1}}_{\tilde\varphi^\gamma}\circ\tilde H.$$
We call holonomy of $\F$ along $\gamma$ the class of $\varphi^\gamma$ up to conjugacy
in $\Diff(\C,0)$ (or a representative by abuse of notation); in the sequel, 
we will call holonomy of $\F$ along the leaf $\Disc^*$
the holonomy of $\F$ along a loop of index one, i.e. of the form $\gamma(t)=(x_0e^{2i\pi t},0)$.
One can check that the holonomy of $\F$ of the form (\ref{eq:BriotBouquet}) along $\Disc^*$
is of the form 
\begin{equation}\label{eq:LinearPartHol}
\varphi(z)=e^{-2i\pi\lambda}z+o(z).
\end{equation}

\begin{example}\label{ex:FormalHolonomy}
The holonomy of the linear model in \ref{eq:PoincareDulacSaddle} is the monodromy of 
the local first integral $H(x,y)=x^{-\lambda}y$ (after choosing a local determination of $x^{-\lambda}=\exp(-\lambda\log(x))$),
namely $\varphi(z)= e^{-2i\pi\lambda} z$. Equivalently, one can integrate the vector field $v=x\partial_x+\lambda y\partial_y$
and deduce a global symmetry $\phi(x,y)=\exp(-2i\pi v)=(x,e^{-2i\pi\lambda}y)$ of the foliation;
and then restrict to a transversal $x=x_0$. 

The holonomy of the non-linear model in \ref{eq:PoincareDulacSaddle} is $\varphi(z)=e^{2i\pi\frac{p}{q}}\exp(v)$ where $v$ is the holomorphic vector field 
$$v=-2i\pi\left(z^{kq+1}+\alpha z^{2kq+1}\right)\partial_z.$$
Indeed, after introducing variables $x={\tilde x}^q$ and $z={\tilde x}^py$, we get $u=z^q$ and the foliation is defined by
$$\tilde\F\ :\ \tilde x\partial_{\tilde x}+q(z^{kq+1}+\alpha z^{2kq+1})\partial_z$$
which is the formal normal form for a saddle-node singular point. The holonomy
of $\tilde\F$ along $\tilde\Disc^*=\{(\tilde x,0)\ ;\ 0<\vert\tilde x\vert<\tilde r\}$ is the $q^{\text{th}}$ iterate 
of the holonomy of $\F$ along $\Disc^*$, namely 
$$\tilde\varphi=\varphi^{\circ q}=\exp(q\cdot v).$$
We conclude by noticing that any $q^{\text{th}}$ root takes the form $\varphi=a\cdot\exp(v)$, and 
we must have the linear part $a=e^{-2i\pi\lambda}=e^{2i\pi\frac{p}{q}}$
(compare \cite{MartinetRamisENS} and \cite{MartinetRamis}). This latter computation is also valid 
for the saddle-node case (\ref{eq:PoincareDulacSaddleNode}) by setting $(p,q)=(0,1)$ and $u=y$.
\end{example}

Mattei-Moussu's Theorem tells us that any two saddles of the form (\ref{eq:BriotBouquet})
are conjugated by an analytic diffeomorphism preserving the two separatrices (i.e. not permuting the coordinate axis)
 if, and only if, they share the same invariant $\lambda$, and the corresponding holonomies 
 along $\Disc^*$ are analytically conjugated. On the other hand, P\'erez-Marco and Yoccoz proved
 in \cite{PMY} that any diffeomorphism germ $\varphi(z)=e^{-2i\pi\lambda}z+o(z)$ is the holonomy 
 of a saddle of the form (\ref{eq:BriotBouquet}). Therefore, for a fixed $\lambda<0$, analytic classification
 of saddles and one-dimensional diffeomorphisms are equivalent. 
 A result of Siegel shows that for generic (with respect to Lebesgue measure) $\lambda$,
 any diffeomorphism, and therefore any saddle is analytically linearizable. But for special
 ``diophantine'' $\lambda$, the moduli space is infinite dimensional as shown by the works
 of Yoccoz in the irrational case, and \'Ecalle-Malgrange-Voronin in the rational case. 
 Martinet and Ramis gave a complete analytic classification of resonant saddles 
(i.e. non linear case of  (\ref{eq:PoincareDulacSaddle})) in \cite{MartinetRamisENS},
and of saddle-nodes $\lambda=0$ in \cite{MartinetRamis}.

\subsection{Normalization in the transversely formal setting}
Let $\Dom\subset\C$ be a domain.  A transversely formal function on $\Dom\times(\C,0)\ni(x,y)$
is a power series 
$$f=\sum_{n\ge0}a_n(x)y^n\in\OO(\Dom)[[y]]$$
i.e. where all $a_n$ are analytic on $\Dom$, and nothing is asked about convergence in the $y$-variable.
A transversely formal  diffeomorphism is a ``map'' $\Phi(x,y)=(x+yf(x,y),yg(x,y))$ where $f,g\in\OO(\Dom)[[y]]$
and $g(x,0)$ does not vanish on $\Dom$. We denote by $\Diff(\Dom\times\widehat{(\C,0)})$
the group of transversely formal diffeomorphisms.

\begin{proposition}\label{prop:PoincareDulacTransForm}
Let $\Disc=\{x\ ;\ \vert x\vert<r\}$ be a disc and let
$\F$ be the foliation defined by (\ref{eq:BriotBouquet}) with $f$ transversely formal on $\Disc\times(\C,0)\ni(x,y)$ 
(for instance, analytic on the neighborhood of $\Disc\times\{0\}$). 
Then there exists a transversely formal diffeomorphism
$\Phi\in\Diff(\Dom\times\widehat{(\C,0)})$  of the form
$$\Phi(x,y)=(x,yg(x,y))$$
such that $\Phi^*\F$ is defined by (\ref{eq:PoincareDulacSaddle}).
\end{proposition}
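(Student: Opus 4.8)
The plan is to normalize $\F$ by a transformation fibered over the $x$-axis, solving the conjugacy equation order by order in $y$, in two stages: first a Poincar\'e--Dulac reduction removing all non-resonant terms (leaving a coefficient depending only on $u=x^py^q$), then a one-dimensional formal normalization of the surviving resonant part. Writing $v=x\partial_x+b(x,y)\partial_y$ with $b=\sum_{j\ge1}b_j(x)y^j\in\OO(\Disc)[[y]]$, I look for $\Phi(x,y)=(x,\psi(x,y))$ with $\psi=\sum_{j\ge1}\psi_j(x)y^j$ and $\psi_1$ non-vanishing on $\Disc$, such that $v\cdot\psi=B(x,\psi)$, where $B(x,Y)=(\lambda+\hat\phi(u))Y$ with $u=x^pY^q$ and $\hat\phi(u)=\sum_{m\ge1}c_mu^m$ is the (a priori unknown) resonant part. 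Note that $v\cdot x=x$, so the $\partial_x$-coefficient is automatically preserved with unit $1$, and matching coefficients of $y^N$ turns the conjugacy equation into a recursion.

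The linear part of each equation is governed by the cohomological operator $\mathcal L_N(w):=xw'(x)+\lambda(N-1)w(x)$, arising from $[x\partial_x+\lambda y\partial_y,\ w(x)y^N\partial_y]=\mathcal L_N(w)\,y^N\partial_y$. On Taylor coefficients it acts diagonally, $\mathcal L_N(\sum_i w^{(i)}x^i)=\sum_i(i+\lambda(N-1))w^{(i)}x^i$, so the obstruction $i+\lambda(N-1)=0$ occurs precisely when $\lambda=-p/q$ and $(i,N)=(pm,qm+1)$, i.e. exactly at the resonant monomials $x^{pm}y^{qm+1}=u^my$. The order $N=1$ is special and is solved first: the equation reads $x\psi_1'+f(x,0)\psi_1=0$, and since $f(0,0)=0$ makes $f(t,0)/t\in\OO(\Disc)$ and $\Disc$ is simply connected, $\psi_1(x)=\exp\!\big(-\int_0^x f(t,0)\,t^{-1}\,dt\big)$ is holomorphic and non-vanishing on $\Disc$, killing the non-resonant terms $f(x,0)\,y\partial_y$.

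For $N\ge2$ the recursion has the form $\mathcal L_N(\psi_N)=R_N$, where $R_N\in\OO(\Disc)$ is assembled from $b_j$ and the already-constructed $\psi_j$ ($j<N$) by ring operations and $x\frac{d}{dx}$, all of which preserve $\OO(\Disc)$. One sets $\psi_N^{(i)}=R_N^{(i)}/(i+\lambda(N-1))$ for each non-resonant $i$, while at the single resonant index $i=pm$ one cannot divide: the coefficient $R_N^{(pm)}\in\C$ is declared to be $c_m$ and $\psi_N^{(pm)}$ is set to $0$. The crux, and what I expect to be the only genuine obstacle, is to verify that $\psi_N$ so defined is holomorphic on the \emph{whole} disc $\Disc$, not merely a formal power series; this is exactly where the transversely formal setting is essential. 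At fixed order $N$ there are \emph{no small divisors}: $|i+\lambda(N-1)|\to\infty$ and $|i+\lambda(N-1)|^{1/i}\to1$, so $\limsup_i|\psi_N^{(i)}|^{1/i}=\limsup_i|R_N^{(i)}|^{1/i}$, the finitely many indices near the resonance being irrelevant. Since $\Disc$ is a disc centered at the singular point $0$, holomorphy on $\Disc$ is equivalent to the Taylor radius at $0$ being at least $r$, so $\psi_N$ inherits holomorphy on $\Disc$ from $R_N$. (Uniform control as $N\to\infty$ genuinely fails, which is why the series need not converge in $y$.) This yields the intermediate form $x\partial_x+(\lambda+\hat\phi(u))y\partial_y$, and when $\lambda\notin\Q$ there are no resonances at all, landing directly on the linear model $x\partial_x+\lambda y\partial_y$.

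For the second stage I reduce the resonant part to $-\tfrac pq+u^k+\alpha u^{2k}$. The key observation is that $v\cdot u=q\,u\,\hat\phi(u)$, so the remaining data is the one-dimensional formal vector field $\xi=q\,u\,\hat\phi(u)\,\partial_u$, and a fibered change $\tilde y=y\,g(u)$ acts on it exactly as the coordinate change $u\mapsto\tilde u=u\,g(u)^q$; conversely every formal $\tilde u=u+O(u^2)$ arises this way through the unit $g=(\tilde u/u)^{1/q}$. Applying the classical one-dimensional formal normal form for $\xi$ (with multiplicity $k+1$ and residue $\alpha$ as the only invariants, unless $\hat\phi\equiv0$, in which case the linear model is already reached) produces the required $g(u)=\sum_m d_m x^{pm}y^{qm}$, whose coefficients in $x$ are monomials and hence trivially lie in $\OO(\Disc)$. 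Composing the two stages gives a transformation $\Phi(x,y)=(x,y\,g(x,y))$ with $g(x,0)$ non-vanishing on $\Disc$, that is $\Phi\in\Diff(\Disc\times\widehat{(\C,0)})$, conjugating $\F$ to (\ref{eq:PoincareDulacSaddle}), as desired.
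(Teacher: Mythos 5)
Your proof is correct and follows essentially the same route as the paper's: a fibered, order-by-order elimination in $y$ governed by the cohomological operator acting diagonally on Taylor coefficients in $x$ (with resonances exactly at the monomials $u^m y$, $u=x^py^q$, and the saddle-node covered by $(p,q)=(0,1)$), followed by the classical one-dimensional formal normalization of the residual resonant part via the substitution $g=(\tilde u/u)^{1/q}$. Your $\limsup$ estimate guaranteeing $\psi_N\in\OO(\Disc)$ at each fixed order simply makes explicit the surjectivity of the operator $\varphi\mapsto n\lambda\varphi+x\varphi'$ on $\OO(\Disc)$ that the paper asserts directly.
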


\begin{remark}The resonant case of (\ref{eq:PoincareDulac}) does not occur since 
we have imposed in expression (\ref{eq:BriotBouquet}) that $\F$ has two invariant curves.
Also, in the saddle-node case $\lambda=0$, the central manifold, tangent to the $0$-eigendirection, 
is convergent in expression (\ref{eq:BriotBouquet}).
\end{remark}

\begin{proof}It is more convenient to define $\F$ as the kernel of the $1$-form
$$\omega=xdy-(\lambda+f)ydx,\ \ \ \text{or better}\ \frac{dy}{y}-(\lambda+f)\frac{dx}{x}$$
as coordinate change will be easier to handle on $1$-forms. Our strategy is to 
make successive changes of $y$-coordinate to kill (or reduce) step-by-step the 
coefficients $a_n(x)$ in $f=\sum_na_n(x)y^n$. Let us first consider a linear
change $y\mapsto\varphi_0(x)y$. Then $\Phi^*\omega$ writes
$$\frac{dy}{y}-\left(\lambda+\left(a_0(x)-\frac{x\varphi_0'(x)}{\varphi_0(x)}\right)y+o(y)\right)\frac{dx}{x};$$
as $a_0$ vanishes at $x=0$, we can integrate and find $\varphi_0=\exp(\int\frac{a_0}{x})\in\OO^*(\Disc)$.
Now we can assume $a_0\equiv0$, i.e. $f(x,0)\equiv0$,  and all further changes of coordinate will be tangent 
to the identity, i.e. with $g(x,0)\equiv1$.
Consider now a change of the form $y\mapsto y(1+\varphi_n(x)y^n)$, $n>0$. Then $\Phi^*\omega$ writes
after normalization (i.e. multiplication by a non vanishing analytic function)
$$\frac{dy}{y}-\left(\lambda+f-\left(n\lambda\varphi_n+x\varphi_n'\right)y^n+o(y^n)\right)\frac{dx}{x}.$$
Since linear operator 
$$ \OO(\Disc)\to\OO(\Disc)\ ;\ \varphi\mapsto n\lambda\varphi+x\varphi'$$
writes
$$\sum_{m\ge0}b_mx^m\ \mapsto\ \sum_{m\ge0}(n\lambda+m)b_mx^m$$
we deduce, when $\lambda\not\in\Q_{\le0}$, that the operator is onto, so that we can find 
at each step a $\varphi_n$ killing the coefficient $a_n$ of $f$. The composition 
$$\Phi=\cdots \circ\Phi_n\circ\cdots\circ\Phi_2\circ\Phi_1\circ\Phi_0$$
converges in $\Diff(\Dom\times\widehat{(\C,0)})$ providing a linearization of $\F$, i.e. $\Phi^*\omega$
is colinear to $xdy-\lambda ydx$. Indeed, for each $n>0$, the jet of order $n$ of $\Phi$ in $y$-variable
is determined by $\Phi_n\circ\cdots\circ\Phi_2\circ\Phi_1\circ\Phi_0$ since all other terms in the composition
are tangent to the identity up to order $n$. 
Assume now that $\lambda\in\Q_{<0}$, i.e. $\lambda=-\frac{p}{q}$ with $p,q\in\Z_{>0}$ relatively prime.
Then we see that we can kill successively all the terms expect powers of $u=x^py^q$ in $f$.
Therefore, we are led by a transversely formal diffeomorphism to the preliminary normal form 
$$(p-f(u))\frac{dx}{x}+q\frac{dy}{y}=f(u)\left(\frac{du}{uf(u)}-\frac{dx}{x}\right)$$
where $f$ is now a formal power series, with $f(0)=0$. One easily check that the freedom 
in this normalization is up to a change of the form $\Phi(x,y)=(x,y\psi(u))$, $\psi(0)\not=0$;
moreover, this induces a change
$$u\mapsto \varphi(u)=u(\psi(x))^q,\ \ \ \text{and therefore}\ \ \ \Phi^*\left(\frac{du}{uf(u)}-\frac{dx}{x}\right)=\varphi^*\left(\frac{du}{uf(u)}\right)-\frac{dx}{x}$$
and we are led to a normalization of a (formal) meromorphic $1$-form in one variable.
It is well known (see proof of \cite[Proposition 2.1]{MartinetRamisENS} or \cite[Proposition 1.1.3]{Lecons}) that there exists a unique change of coordinate $\varphi$ tangent to the identity such that
$$\varphi^*\left(\frac{du}{uf(u)}\right)=\frac{du}{u^{k+1}}+\alpha\frac{du}{u}$$
which is analytic (resp. formal) if $f$ is analytic (resp. formal). The only invariants are the pole order $k+1$ (here, $k>0$ is the vanishing order of $f$), and the residue $\alpha$. 
The last normalization is therefore given by the choice of a determination of  $\psi(u)=\left(\frac{\varphi(u)}{u}\right)^{1/q}$.
In the normal form (\ref{eq:PoincareDulacSaddle}), we have choosen another normalization,
namely $\frac{du}{qu^{k+1}(1+\alpha u^k)}$ where the residue is $-\frac{\alpha}{q}$.
Finally, in the saddle-node case $\lambda=0$, we note that everything works the same with $u=y$ (i.e. $p=0$ and $q=1$).
\end{proof}

We deduce a formal version of Mattei-Moussu's Theorem:

\begin{corollary}\label{Cor:FormalMatteiMoussu}
Let $\F_1$ and $\F_2$ be two foliations of the form
$$\F_i=\ker(\omega_i),\ \ \ \omega_i=xdy-(\lambda+o(y))ydx,\ \ \ \lambda\in\C$$
and assume they are both analytic at the neighborhood of the disc $\Disc$.
Then, there exists a transversally formal diffeomorphism $\hat\Phi(x,y)=(x,y+o(y))$ along $\Disc$
conjugating the foliations $\hat\Phi^*\F_2\wedge\F_1$,
i.e. $\hat\Phi^*\omega_2\wedge\omega_1=0$, if and only if, 
the respective holonomies along the punctured disc $\Disc^*$ are formally conjugated.
\end{corollary}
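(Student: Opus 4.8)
The plan is to reduce both conditions to the equality of the formal normal forms (\ref{eq:PoincareDulacSaddle}) attached to $\F_1$ and $\F_2$ by Proposition \ref{prop:PoincareDulacTransForm}, the bridge between the two sides being the invariance of holonomy under fibered conjugacy, taken here in the transversally formal category. Concretely, I would first record the following elementary fact: if $\Psi(x,y)=(x,yg(x,y))$ is a transversally formal diffeomorphism preserving the fibration $\{x=\text{const}\}$ with $\Psi^*\F=\F'$, then its restriction to a fixed fibre $\{x=x_0\}$ is a genuine formal series $y\mapsto yg(x_0,y)$ in $\Diff(\C,0)$ and it conjugates the holonomy of $\F$ along $\Disc^*$ to that of $\F'$. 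This is the usual ``holonomy is a conjugacy invariant'' statement: transporting the loop of index one through $\Psi$ makes sense because $\Psi$ preserves both the fibration and the foliations, and the only point deserving care is that a transversally formal fibered map does restrict to an honest formal diffeomorphism of the fibre, which is immediate from $g\in\OO(\Disc)[[y]]$; the conjugacy relation between the two (analytic) holonomies is then checked order by order in $y$.

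Granting this, the direct implication is immediate: if some $\hat\Phi(x,y)=(x,y+o(y))$ satisfies $\hat\Phi^*\omega_2\wedge\omega_1=0$, its restriction to a transversal conjugates the two holonomies formally. For the converse I would argue through normal forms. By Proposition \ref{prop:PoincareDulacTransForm} there are transversally formal diffeomorphisms $\Phi_i(x,y)=(x,y+o(y))$, tangent to the identity since the coefficient $f(x,0)$ vanishes here, conjugating $\F_i$ to a model (\ref{eq:PoincareDulacSaddle}) with invariants $(\lambda,k_i,\alpha_i)$; when $\lambda\notin\Q_{\le0}$ the model is the linear one and carries no invariant beyond $\lambda$, and the saddle-node $\lambda=0$ is treated identically with $u=y$ and model (\ref{eq:PoincareDulacSaddleNode}). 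By the invariance fact above applied to $\Phi_i$, the holonomy of $\F_i$ is formally conjugate to the holonomy $h_i^{\mathrm{norm}}$ of its model, computed in Example \ref{ex:FormalHolonomy}: in the resonant case $h_i^{\mathrm{norm}}(z)=e^{2i\pi p/q}\exp(v_i)$ with $v_i=-2i\pi(z^{k_iq+1}+\alpha_iz^{2k_iq+1})\partial_z$. Hence the hypothesis that the holonomies of $\F_1$ and $\F_2$ are formally conjugate forces $h_1^{\mathrm{norm}}$ and $h_2^{\mathrm{norm}}$ to be formally conjugate as well.

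The decisive step is then to read the model invariants off this holonomy germ. A formal conjugacy between $h_1^{\mathrm{norm}}$ and $h_2^{\mathrm{norm}}$ conjugates their $q$-th iterates $\psi_i:=(h_i^{\mathrm{norm}})^{\circ q}=\exp(qv_i)$, which are tangent to the identity; the multiplicity $k_iq+1$ of $\psi_i$ and its residue $\alpha_i$ are complete formal invariants of such a germ, through exactly the normalization of one-variable meromorphic $1$-forms already invoked in the proof of Proposition \ref{prop:PoincareDulacTransForm} (see \cite{MartinetRamisENS}, \cite{Lecons}). This yields $(k_1,\alpha_1)=(k_2,\alpha_2)$, so that $\F_1$ and $\F_2$ have the very same model $\F^{\mathrm{norm}}$ (in the non-resonant cases this is automatic, the model depending only on the common $\lambda$). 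Consequently $\hat\Phi:=\Phi_2\circ\Phi_1^{-1}$ is a transversally formal diffeomorphism of the form $(x,y+o(y))$ with $\hat\Phi^*\F_2=\F_1$, i.e. $\hat\Phi^*\omega_2\wedge\omega_1=0$. I expect the only genuinely substantive input to be the faithfulness in this last step, namely that distinct pairs $(k,\alpha)$ produce formally non-conjugate holonomies; this is the formal classification of resonant (resp. parabolic) germs, which coincides with the $1$-form normalization already used in the paper, and everything else is bookkeeping with Proposition \ref{prop:PoincareDulacTransForm} and Example \ref{ex:FormalHolonomy}.
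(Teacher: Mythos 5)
Your proposal is correct and follows essentially the same route as the paper: both directions are handled by reducing to the formal normal form (\ref{eq:PoincareDulacSaddle}) via Proposition \ref{prop:PoincareDulacTransForm}, using that a fibered transversally formal conjugacy restricts to a formal conjugacy of holonomies, and then recovering $(k,\alpha)$ from the formal class of the tangent-to-identity iterate $\varphi^{\circ q}$ exactly as in Example \ref{ex:FormalHolonomy}. The only cosmetic difference is that you phrase the second invariant as the residue of the associated $1$-form $\frac{du}{uf(u)}$, while the paper reads it as the coefficient of $z^{2kq+1}$ after killing intermediate terms; these are the same complete formal invariant.
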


\begin{proof}If we have a transversally formal conjugacy between foliations along even the punctured disc
is enough to conclude that the holonomies are conjugated. For the converse, we can assume that 
$\F_1$ and $\F_2$ are into normal form (\ref{eq:PoincareDulacSaddle}), with the same $\lambda$
(resp. (\ref{eq:PoincareDulacSaddleNode}) when $\lambda=0$.
Only the resonant saddle case $\lambda=-\frac{p}{q}$ and saddle-node case $\lambda=0$
need some arguments, as we have to distinguish between the linear case, and the several possible non-linear cases
of (\ref{eq:PoincareDulacSaddle}), i.e how to retrieve $k$ and $\alpha$ from the holonomy. 
But, as shown in Example \ref{ex:FormalHolonomy}, the holonomy
of the non-linear saddle (\ref{eq:PoincareDulacSaddle}) or saddle-node (\ref{eq:PoincareDulacSaddleNode}) takes the form
$$\varphi^{\circ q}(z)=\exp\left(  \left( z^{kq+1}-\frac{\alpha}{2i\pi q}z^{2kq+1}\right) \partial_z\right) 
=z+z^{kq+1}+\left(\frac{kq+1}{2}-\frac{\alpha}{2i\pi q}\right)z^{2kq+1}+o\left( z^{2kq+1}\right) .$$
It is well-known (see \cite[p. 581]{MartinetRamisENS} or \cite[Section1.3]{Lecons}) that the formal conjugacy
class of the tangent-to-identity diffeomorphism $\varphi^{\circ q}$ is characterized by two invariants, namely
the maximal contact $kq$ to the identity, and the coefficient of the monomial $z^{2kq+1}$ once we have
killed intermediate coefficients; the first one gives us $k$, and we deduce $\alpha$ from the latter one.
\end{proof}

\section{Mattei-Moussu's Theorem}\label{Sec:MM}

\begin{theorem}[Mattei-Moussu]\label{thm:MM}
Let $\F_1$ and $\F_2$ be two foliations of the form
$$\F_i=\ker(\omega_i),\ \ \ \omega_i=xdy-(\lambda+o(y))ydx,\ \ \ \lambda\in\R_{<0}.$$
Assume that they are well defined near $\Disc^*=\{(x,0)\ ;\ 0<\vert x\vert<r\}$ and have same holonomy along this leaf.
% (or conjugate).
Then, there exists an analytic diffeomorphism $\Phi(x,y)=(x,y+o(y))$ conjugating the foliations: $\Phi^*\F_2\wedge\F_1$,
i.e. $\Phi^*\omega_2\wedge\omega_1=0$.
\end{theorem}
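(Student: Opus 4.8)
The plan is to combine the formal normalization already at hand (Proposition \ref{prop:PoincareDulacTransForm} and Corollary \ref{Cor:FormalMatteiMoussu}) with a genuinely geometric conjugacy built from the holonomy, and then to reconcile the two. Concretely, both foliations admit two separatrices, namely the axes; since the holonomies along the leaf $\Disc^*=\{y=0\}$ are analytically conjugate, I would first use this conjugacy to produce an \emph{analytic} conjugacy between the two foliations on a neighborhood of an annulus inside the separatrix $\{y=0\}$. This is the classical, elementary geometric step of Mattei and Moussu: a conjugacy of the holonomy maps on a transversal $\{x=x_0\}$ propagates along the leaves of the foliation, and near a loop of index one (an annulus $r_1<|x|<r_2$ avoiding the singular point) the leaf-space is well understood, so the conjugacy extends to a full neighborhood of the annulus. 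Call this analytic conjugacy $\Psi$, defined on $\{(x,y)\ ;\ r_1<|x|<r_2,\ |y|<\varepsilon\}$ and satisfying $\Psi(x,y)=(x,y+o(y))$.

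Next I would invoke the formal side. By Corollary \ref{Cor:FormalMatteiMoussu}, equal holonomy (in particular equal $\lambda$ and equal formal conjugacy class of the holonomy) yields a transversely formal conjugacy $\hat\Phi(x,y)=(x,y+o(y))$ on the whole disc $\Disc$, i.e. an element of $\Diff(\Disc\times\widehat{(\C,0)})$ with $\hat\Phi^*\F_2=\F_1$. The heart of the argument is then to compare $\Psi$ and $\hat\Phi$ on the annulus. Both conjugate $\F_1$ to $\F_2$ there (the transversely formal one being a priori only formal in $y$), so the composition $S:=\hat\Phi^{-1}\circ\Psi$ is, on the annulus, a transversely formal \emph{symmetry} of $\F_1$: a transversely formal diffeomorphism fixing the leaves of $\F_1$, of the form $(x,y)\mapsto(x,y+o(y))$. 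This is exactly the comparison described in the introduction: the analytic conjugacy and the formal one differ by a transversely formal symmetry of the foliation on the annulus.

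The key step, and the one I expect to be the main obstacle, is to show that every such transversely formal symmetry $S$ of $\F_1$ defined over the annulus $r_1<|x|<r_2$ in fact \emph{extends} to a transversely formal symmetry over the full disc $\Disc$. The point is purely one-dimensional once organized correctly: passing to the normal form, the symmetries of $\F_1$ are governed by the centralizer of the flow of the normalizing vector field, hence each is expressible through the first integral $H$ (or, in the resonant/saddle-node case, through the invariant $u=x^py^q$ and the holonomy generator). Writing $S$ coefficient-by-coefficient as $S(x,y)=(x,\sum_n s_n(x)y^n)$ with $s_n\in\OO(r_1<|x|<r_2)$, the symmetry condition forces each $s_n$ to satisfy a linear recursion whose solutions are, modulo lower-order data, holomorphic functions of the foliation invariants; these admit only removable singularities at $x=0$, so each $s_n$ extends holomorphically across the annulus to all of $\Disc$ by the Riemann Extension Theorem. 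This is where the proof departs from Mattei--Moussu: instead of controlling growth along the leaves by Gronwall's inequality, I control the \emph{transversely formal} symmetry group directly and extend it algebraically.

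Once $S$ extends over $\Disc$ as a transversely formal symmetry $\tilde S$ of $\F_1$, the desired conjugacy is recovered by setting $\Phi:=\hat\Phi\circ\tilde S$ on the annulus; this $\Phi$ agrees with the analytic map $\Psi$ there, so it is analytic on the annulus, while simultaneously it is transversely formal on all of $\Disc$ and conjugates $\F_1$ to $\F_2$. An analytic map on an annulus that is transversely formal across the disc is, coefficient-by-coefficient in $y$, given by Laurent coefficients that are in fact power series in $x$; hence $\Phi$ is analytic on the whole polydisc, of the claimed form $\Phi(x,y)=(x,y+o(y))$, and satisfies $\Phi^*\omega_2\wedge\omega_1=0$. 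I would close by remarking, as the introduction anticipates, that nothing in this scheme uses $\lambda<0$ beyond the existence of the two analytic separatrices, so the same argument handles the saddle-node case whenever the central manifold is convergent.
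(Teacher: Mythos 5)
Your proposal is correct and is essentially the paper's own proof: an analytic conjugacy near the leaf built from the holonomy conjugacy, the transversely formal conjugacy of Corollary \ref{Cor:FormalMatteiMoussu}, their comparison producing a transversely formal symmetry of $\F_1$, the extension of that symmetry across the disc (the paper's Lemma \ref{lem:SymmetryMM}, proved in normal form: Laurent coefficients killed by $(m+\lambda n)a_{m,n}=0$ in the linearizable case, and a closed-form/residue argument forcing $g=g(u)$ in the resonant case), and the final Laurent-coefficient argument giving analyticity at the origin. Your only slip is cosmetic: extension from the annulus is not by the Riemann Extension Theorem (there is no boundedness input here, and the function is not defined near $x=0$) but by the algebraic vanishing of negative Laurent coefficients forced by the symmetry equation, exactly as in Lemma \ref{lem:SymmetryMM}.
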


\begin{proof}The first part of the proof follows the paper \cite{MatteiMoussu}.
Let $p_0\in\Disc^*$ and $H_1,H_2$ be local first integrals of $\F_1,\F_2$ respectively at $p_0$.
Assume that $H_1$ and $H_2$ have same monodromy $\varphi=\varphi^\gamma\in\Diff(\C,0)$
along a generating loop $\gamma$ of $\pi_1(\Disc^*)$.
Notice that the local diffeomorphism $\Phi_i(x,y)=(x,H_i(x,y))$ at $p_0$ is conjugating 
$\F_i$ to the horizontal foliation $\{y=\text{constant}\}$, i.e. $\Phi_i^*dy\wedge\omega_i=0$. 
Therefore, the local diffeomorphism $\Phi:=(\Phi_2)^{-1}\circ\Phi_1$ is conjugating $\F_1$ to $\F_2$
as in the statement, near $p_0$. By analytic continuation of $H_1,H_2$ along paths in $\Disc^*$
(see Section \ref{sec:Hol}) one can deduce the analytic continuation of $\Phi$ by the same formula.
Since $H_1$ and $H_2$ have the same monodromy $\varphi$, we see that $\Phi$ is uniform: 
after analytic continuation along the generating loop $\gamma$, we get
$$\Phi^\gamma=(\Phi_2^\gamma)^{-1}\circ\Phi_1^\gamma=(\phi\circ\Phi_2)^{-1}\circ(\phi\circ\Phi_1)
=(\Phi_2)^{-1}\circ\phi^{-1}\circ\phi\circ\Phi_1=(\Phi_2)^{-1}\circ\Phi_1=\Phi$$
where $\phi(x,y)=(x,\varphi)$. We therefore obtain an analytic diffeomorphism $\Phi:U_1\to U_2$ 
between neighborhoods $U_i$ of $\Disc^*$ conjugating the restrictions $\F_1$ to $\F_2$.

In the second part of the proof, we want to extend $\Phi$ analytically on the neighborhood of $0$.
Our idea is to compare $\Phi$ with the transversely formal diffeomorphism $\hat\Phi$ along $\Disc$
provided by Corollary \ref{Cor:FormalMatteiMoussu}. As they are both conjugating $\F_1$ to $\F_2$,
$$\hat\phi:=\hat\Phi^{-1}\circ\Phi$$
defines a transversely formal diffeomorphism along $\Disc^*$ that preserves the foliation $\F_1$.
We are going to show, in Lemma \ref{lem:SymmetryMM}, that this forces $\hat\phi$ to extends as 
a transversely formal diffeomorphism on the whole disc $\Disc$. Therefore, the diffeomorphism
$\hat\Phi\circ\hat\phi=\Phi$ is both analytic near $\Disc^*$ and transversely formal on $\Disc$.
Equivalently, the Laurent power series decomposition of its coefficients are convergent and with only positive
exponents: they define holomorphic germs at $0$, therefore extending $\Phi$.
\end{proof}

\begin{lemma}\label{lem:SymmetryMM}
Let $\F$ be a foliation of the form
$$\F=\ker(\omega),\ \ \ \omega=xdy-\left( \lambda+o(y)\right) ydx,\ \ \ \lambda\in\C\setminus\Q_{>0}$$
analytic near the disc $\Disc=\{(x,0)\ ;\ \vert x\vert<r\}$.
Then, any transversely formal diffeomorphism $\hat\Phi(x,y)=(x,\hat\phi(x,y))$ along the punctured disc $\Disc^*$
commuting with $\F$, i.e. $\hat\Phi^*\omega\wedge\omega=0$, extends as a  transversely formal diffeomorphism
along the whole disc $\Disc$.
\end{lemma}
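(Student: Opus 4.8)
The plan is to reduce $\F$ to one of the normal forms of Proposition~\ref{prop:PoincareDulacTransForm}, translate the symmetry condition into a commutation with the holonomy on a transversal, and then read off the holomorphy of the coefficients at $0$ from the explicit shape of the centralizer. First I would normalize: by Proposition~\ref{prop:PoincareDulacTransForm} there is $\Psi\in\Diff(\Dom\times\widehat{(\C,0)})$ of the form $\Psi(x,y)=(x,yg(x,y))$, whose coefficients are holomorphic on the \emph{whole} disc $\Disc$, conjugating $\F$ to one of (\ref{eq:PoincareDulacSaddle}) or (\ref{eq:PoincareDulacSaddleNode}). Replacing $\hat\Phi$ by $\Psi^{-1}\circ\hat\Phi\circ\Psi$ yields again a transversely formal symmetry along $\Disc^*$, and this conjugate extends across $0$ if and only if $\hat\Phi$ does, since $\Psi^{\pm1}$ already have coefficients in $\OO(\Disc)[[y]]$. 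Hence I may assume $\F$ is already in normal form.

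In the non-resonant case $\lambda\in\C\setminus\Q$ the normal form is linear, with multivalued first integral $H=x^{-\lambda}y$ and monodromy $\varphi(z)=e^{-2i\pi\lambda}z$ (Example~\ref{ex:FormalHolonomy}). A symmetry $\hat\Phi(x,y)=(x,\hat\phi)$ sends the leaf $\{x^{-\lambda}y=c\}$ to a leaf $\{x^{-\lambda}y=\sigma(c)\}$, so $\hat\phi(x,y)=x^{\lambda}\sigma(x^{-\lambda}y)=\sum_{n\ge1}\sigma_n\,x^{\lambda(1-n)}y^n$ for some formal $\sigma(z)=\sum_n\sigma_n z^n$. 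The coefficient $a_n(x)=\sigma_n x^{\lambda(1-n)}$ is single-valued on $\Disc^*$ exactly when $\sigma_n=0$ or $\lambda(1-n)\in\Z$, which is precisely the commutation $\sigma\circ\varphi=\varphi\circ\sigma$. As $\lambda$ is irrational, this forces $n=1$, so $\hat\phi=\sigma_1 y$ is manifestly holomorphic on $\Disc$. This computation already isolates the role of the hypothesis: for $\lambda\in\Q_{>0}$ some surviving exponent $\lambda(1-n)$ would be a \emph{negative} integer and the extension would genuinely fail, whereas for $\lambda\notin\Q_{>0}$ every surviving exponent is a nonnegative integer.

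The resonant saddle $\lambda\in\Q_{<0}$ and the saddle-node $\lambda=0$ follow the same scheme, now organized by the resonant monomial $u=x^py^q$ (resp. $u=y$). The symmetry $\hat\Phi$ fixing $x$ restricts on a transversal to a formal $\sigma$ commuting with the holonomy, which by Example~\ref{ex:FormalHolonomy} is $e^{2i\pi p/q}\exp(v)$ with $v$ supported on the resonant powers $z^{1+jkq}$; its formal centralizer is generated by the order-$q$ rotation $(x,y)\mapsto(x,e^{2i\pi/q}y)$ and the flow $\exp(tv)$. Equivalently, writing $X=x\partial_x+(\lambda+u^{k}+\alpha u^{2k})y\partial_y$, the infinitesimal symmetries $Y=b\,\partial_y$ fixing $x$ solve $X(b)=(c+quc')b$ with $c=\lambda+u^k+\alpha u^{2k}$; a monomial $x^ey^n$ in $b$ must then have leading exponent $e=\lambda(1-n)=\tfrac{p(n-1)}{q}\ge0$, with all higher-order corrections carrying even larger powers of $x$. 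Thus each coefficient of $\hat\phi$ involves only nonnegative integer powers of $x$ and so extends holomorphically across $0$.

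The main obstacle is precisely this last step in the resonant and saddle-node cases: identifying the formal centralizer of the \emph{nonlinear} holonomy and checking that, after passing from the transversal coordinate back to $(x,y)$, the permitted resonant monomials correspond to nonnegative powers of $x$. The linear model makes the mechanism transparent because the first integral $x^{-\lambda}y$ is explicit, but in the resonant case there is no such closed-form single-valued first integral, so one must instead control the $\F$-transport near the singular point via the eigen-equation above; here one uses, in the saddle-node case, that the central manifold is analytic (as recorded in the Remark). Once the coefficients are seen to have no negative Laurent part, the transversely formal symmetry $\hat\Phi$ extends along the full disc $\Disc$, as claimed.
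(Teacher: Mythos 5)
Your normalization step and your treatment of the non-resonant case are correct and essentially coincide with the paper's own argument: the paper writes $\hat\Phi(x,y)=(x,yg)$, observes that the symmetry condition says $g$ is a first integral, and reads off $(m+\lambda n)a_{m,n}=0$ on Laurent coefficients, so that a negative power of $x$ survives only if $\lambda\in\Q_{>0}$ --- the same exponent bookkeeping as your $a_n(x)=\sigma_n x^{\lambda(1-n)}$. One small omission in your case split: the normal form can be \emph{linear} with $\lambda\in\Q_{<0}$ (the linearizable resonant saddle), a case covered neither by your ``$\lambda\notin\Q$'' analysis nor by your nonlinear-centralizer discussion. There $\sigma$ need only commute with the rotation $z\mapsto e^{2i\pi p/q}z$, so all $n\equiv 1\pmod q$ survive, with exponents $p(n-1)/q\ge 0$; the conclusion still holds, but it should be said.

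The genuine gap is in your resonant and saddle-node cases. You pass from the symmetry $\hat\Phi$ to its restriction on a transversal, invoke the formal centralizer of the holonomy, and then analyze \emph{infinitesimal} symmetries $Y=b\,\partial_y$ via the eigen-equation $X(b)=(c+quc')b$. Two steps are missing there: (i) nothing shows that a transversely formal symmetry along $\Disc^*$ is the exponential of such a $Y$ (composed with a rotation), nor that it is determined by its transversal restriction --- transporting the transversal data along leaves back to the singular point is precisely the delicate point in the transversely formal category, and you yourself flag it as ``the main obstacle'' without resolving it; (ii) your claim that ``all higher-order corrections carry even larger powers of $x$'' is not automatic: since $pk+\lambda qk=0$, all monomials along a chain $(e+jpk,\,n+jqk)$ share the eigenvalue $e+\lambda n$, so the recursion is resonant along $u^k$-shifts and must be solved chain by chain rather than diagonally. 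The paper closes both points at once by a much lighter device that avoids centralizers entirely: in normal form $\F$ is defined by the \emph{closed} one-form $\omega=\frac{du}{u^{k+1}}+\alpha\frac{du}{u}-\frac{dx}{x}$; then $\hat\Phi^*\omega$ is again closed and defines the same foliation, so the ratio $\hat\Phi^*\omega/\omega$ is a first integral along $\Disc^*$, hence constant, and comparing residues along $\Disc^*$ forces $\hat\Phi^*\omega=\omega$. This gives $dg\wedge du=0$, i.e.\ $g=g(u)$, and since $g$ contains only powers $y^n$ with $n\ge0$ and $u=x^py^q$ (resp.\ $u=y$ in the saddle-node case), only nonnegative powers of $x$ occur, so $g$ extends across $0$. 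Grafting this closed-form argument in place of your centralizer analysis would make your write-up complete.
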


\begin{proof} By Proposition \ref{prop:PoincareDulacTransForm}, we can assume that $\F$
is in formal normal form (\ref{eq:PoincareDulacSaddle}) and consider first the linear case: 
we can equivalently define $\F$ with $\omega=\frac{dy}{y}-\lambda\frac{dx}{x}$. 
If we write $\hat\Phi(x,y)=(x,y\cdot g(x,y))$, then the condition $\hat\Phi^*\omega\wedge\omega=0$
writes 
$$dg\wedge\left(\frac{dy}{y}-\lambda\frac{dx}{x}\right)=0\ \ \ \text{(or}\ (x\partial_x+\lambda y\partial_y)\cdot g=0\text{)}$$
i.e. $g$ is a first integral for $\F$. If we decompose $g$ in Laurent power series, then we get
$$g=\sum_{m\in\Z,\ n\ge0}a_{m,n}x^my^n\ \ \ \leadsto\ \ \ 
(x\partial_x+\lambda y\partial_y)\cdot g=\sum_{m\in\Z,\ n\ge0}(m+\lambda n)a_{m,n}x^my^n=0.$$
The obstruction to extend at $0$ comes from non zero coefficients $a_{m,n}$ with negative $m$;
but this can only occur when $\lambda\in\Q_{>0}$, what we have excluded.

Consider now the resonant case, and assume $\F$ is defined by 
$$\omega=\frac{du}{u^{k+1}}+\alpha\frac{du}{u}-\frac{dx}{x}=\left(\frac{1}{u^{k}}+\alpha\right)\frac{du}{u}-\frac{dx}{x}$$
(see proof of Proposition \ref{prop:PoincareDulacTransForm}).
Then we have 
$$\hat\Phi^*\omega=\left(\frac{1}{u^{k}g^k}+\alpha\right)\left(\frac{du}{u}+\frac{dg}{g}\right)-\frac{dx}{x}.$$
The ratio of the closed one-forms $\omega$ and $\hat\Phi^*\omega$ must be a first integral of $\F$
and therefore be constant. Moreover, since they have the same residu along $\Disc^*$ (mind that $g$
is not vanishing), then they should be equal, and therefore $dg\wedge du=0$, i.e. $g=g(u)$.
But non zero coefficients $a_{m,n}$ of $g$ occur only for $n\ge0$, and therefore only for $m\ge0$ since
$u=x^py^q$. So $g$ extends on the whole of $\Disc$.
\end{proof}

\begin{remark}\label{rem:MMsaddlenode}
Our proof of Mattei-Moussu's Theorem works not only for saddles $\lambda<0$,
but also for saddle-node $\lambda=0$ provided that the central manifold is convergent
(i.e. the formal invariant curve along the $0$-eigendirection).
It was proved in \cite{MartinetRamis} that the holonomy of the strong manifold characterize
the foliation, but the proof was completely indirect, by comparing the two moduli spaces.
The original approach of Mattei and Moussu fails to conclude in that case.
Here, we get a direct and simple proof when the central manifold is convergent. 
We will explain later how to adapt to the case of a divergent central manifold (the generic case).
\end{remark}

\begin{remark}\label{rem:MMsaddlenodeCentral}
In the saddle-node case $\lambda=0$, it is well-known that 
the holonomy of the central manifold, when it is convergent, fails to
characterize the saddle-node, even formally. In fact, if we want to normalize
a saddle-node of the form $x^2dy-(1+o(y))ydx$ by a transversly formal change of coordinates
$\Phi(x,y)=(x,\phi(x,y))$ along the disc $\Disc$ or even along the punctured disc $\Disc^*$, 
we find obstructions at each step. Indeed, if it takes the form 
$$\frac{dy}{y}-(1+f(x)y^n+o(y^n)\frac{dx}{x}$$
and we apply a change of the form $y\mapsto y(1+g(x)y^n)$, then we find
$$\frac{dy}{y}-(1+\tilde f(x)y^n+o(y^n)\frac{dx}{x}\ \ \ \text{with}\ \ \ \tilde f=f-x^2g'(x)-ng(x).$$
If we want to make $\tilde f=0$ as it should be for $n>>0$, we see by integration that we 
must set $g=e^{-n/x}\int \frac{e^{n/x}f(x)}{x^2}$ which is multiform on $\Disc^*$ in general.
Moreover, there are many transversely formal symmetries along $\Disc^*$ that do not extend:
we can take any transformation of the form $y\mapsto y g(ye^{1/x})$ with $g(0)\not=0$.
\end{remark}

\begin{remark}\label{rem:CommentMM}
If we want to compare our proof to the original one of Mattei and Moussu,
the present one has the disadvantage that we have to discuss between 
the different formal types, and deal with existence of first integrals, 
closed one-forms; only in the non resonant part $\lambda\not\in\Q$ it is 
very short and easy. On the other hand, this approach can be used to classify
pairs of singular foliations as we shall see in the next section. 
\end{remark}

\begin{remark}\label{rem:MMpairs}
Mattei-Moussu's Theorem can be considered as a classification of pairs of foliations.
Indeed, consider a pair $(\F_1,\F_2)$ of foliations germs such that $\F_1$ is regular and 
$\F_2$ is a saddle, one invariant curve of which is a leaf of $\F_1$. If we have two such pairs,
$(\F_1,\F_2)$ and $(\G_1,\G_2)$, one can easily find coordinates in which 
$\F_1,\G_1$ are vertical, defined by $\ker(dx)$, and the two invariant curves 
of $\F_2,\G_2$ are on coordinate axis. Then, the two pairs are conjugated if and only if
they share the same $\lambda$ and have conjugated holonomies. Indeed,
the conjugacy between the foliations $\F_2$ and $\G_2$ provided by Theorem \ref{thm:MM}
preserves the variable $x$ and therefore $\F_1=\G_1$. This was one of the motivation to us 
for looking at classifications of pairs in a more general setting.
\end{remark}

\section{Reduction of singularities for a pair of foliations}\label{sec:Seidenberg}

Consider a pair $(\F_1,\F_2)$ of holomorphic singular foliations on a complex surface $M$.
Denote by $\Tang(\F_1,\F_2)$ the tangency divisor between the two foliations:
locally, if $\F_i=\ker(\omega_i)$ where $\omega_i$ is a holomorphic one-form with isolated zeroes,
then $\Tang(\F_1,\F_2)$ is defined by the ideal $(f)$ given by $\omega_1\wedge\omega_2=f dx\wedge dy$.
We note that $\Tang(\F_1,\F_2)$ is passing through all singular points of each $\F_i$. Recall Seidenberg's Theorem:

\begin{theorem}[Seidenberg \cite{Seidenberg}]
Let $\F$ be a holomorphic singular foliations on a complex surface $M$.
Then there is a proper map $\pi:\tilde M\to M$ obtained by a finite sequence of punctual blowing-up 
such that the lifted foliation $\tilde\F=\pi^*\F$
has only {\bf reduced singular points}, i.e. of the type
$$\F=\ker(xdy-(\lambda+o(1))ydx)\ \ \ \text{with}\ \ \ \lambda\in\C\setminus\Q_{>0}.$$
Moreover, this property is stable under additional blowing-up.
\end{theorem}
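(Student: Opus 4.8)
The plan is to attach to each singular point a numerical invariant, the \emph{algebraic multiplicity}, and to run an induction in which blowing up a non-reduced point strictly decreases a suitable global invariant built from these multiplicities, then to check that reducedness, once achieved, survives further blow-ups. First I would define, for a germ $\F=\ker(a\,dx+b\,dy)$ at a point $p$ with $a,b\in\C\{x,y\}$ having no common factor, the multiplicity $\nu_p(\F)=\min(\mathrm{ord}_p\,a,\mathrm{ord}_p\,b)$. A point is reduced exactly when $\nu_p=1$ and the linear part of the dual vector field has eigenvalues whose ratio $\lambda$ lies in $\C\setminus\Q_{>0}$ (this includes the saddle-node $\lambda=0$); the only non-reduced points of multiplicity $1$ are the \emph{resonant nodes}, with $\lambda\in\Q_{>0}$, and the \emph{nilpotent} points, whose linear part is non-zero but nilpotent. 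Everything else is non-reduced with $\nu_p\ge 2$.

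The heart of the argument is the behaviour of $\nu$ under a single punctual blow-up $\pi$, with exceptional divisor $E=\pi^{-1}(p)$. Writing the homogeneous expansions $a=a_\nu+a_{\nu+1}+\cdots$ and $b=b_\nu+b_{\nu+1}+\cdots$ of the components of $v=a\partial_x+b\partial_y$, one inspects the degree-$(\nu+1)$ radial polynomial $x b_\nu - y a_\nu$, which is the lowest-order term of $xb-ya$. If it does not vanish identically (\emph{non-dicritical} case), then $E$ is $\tilde\F=\pi^*\F$ invariant, the singular points of $\tilde\F$ on $E$ sit over its roots in $E\cong\bbP^1$, and a direct computation in the two standard charts shows
\[
\nu_q(\tilde\F)\le\nu_p(\F)\quad\text{for each }q\in E,\qquad \sum_{q\in E}\nu_q(\tilde\F)\le\nu_p(\F)+1 .
\]
If it vanishes identically (\emph{dicritical} case), dividing by one extra power of the exceptional coordinate shows $\tilde\F$ is transverse to $E$ away from finitely many points, each of multiplicity strictly below $\nu_p(\F)$. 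In either case no multiplicity increases.

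For termination I would order the configurations by a lexicographic invariant valued in a well-ordered set, for instance the pair
\[
\left(\max_{q}\nu_q(\F),\ \#\{q\ :\ \nu_q(\F)\ \text{is maximal}\}\right),
\]
refined on the multiplicity-one stratum by the ``resonance height'' $p+q$ of each resonant node (when $\lambda=p/q$) and by a secondary invariant controlling the nilpotent points. The transformation rule $\lambda\mapsto\lambda-1$, read off from the chart computation for a linear node, shows that blowing up a resonant node lowers $p+q$ until $\lambda$ leaves $\Q_{>0}$ or reaches $0$ (a saddle-node), both of which are reduced; blowing up a nilpotent point lowers its secondary invariant. Combining this with the multiplicity bounds above, each blow-up of a non-reduced point strictly decreases the lexicographic invariant, so the process stops after finitely many steps.

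The main obstacle I foresee is precisely this termination bookkeeping: the blow-up lemma yields only $\nu_q(\tilde\F)\le\nu_p(\F)$, not a strict drop, so the configurations in which the maximal multiplicity is preserved — the nilpotent points and the borderline non-dicritical cases, where the new point can again carry the full multiplicity — must be pinned down by the secondary invariants, and one must verify that these secondary invariants genuinely decrease and cannot resurface at a higher multiplicity. The nilpotent case is the most delicate, since a single nilpotent point may require several blow-ups and can spawn saddle-nodes. The final assertion, stability under additional blow-ups, is then a routine verification: blowing up a reduced singular point, or any point lying on the invariant exceptional curves, produces by the same chart computations only reduced singular points, so reducedness together with the normal-crossing configuration of invariant curves is preserved.
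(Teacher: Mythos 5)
First, a remark on scope: the paper does not prove this statement at all --- it is quoted as Seidenberg's theorem with a reference to \cite{Seidenberg} --- so your proposal can only be measured against the classical proofs (Seidenberg's original paper, the appendix of \cite{MatteiMoussu}, or \cite{Brunella}). Your outline follows that classical route, and much of it is sound: the dichotomy on the radial polynomial $xb_\nu-ya_\nu$, the bounds $\nu_q(\tilde\F)\le\nu_p(\F)$ and $\sum_{q\in E}\nu_q(\tilde\F)\le\nu_p(\F)+1$ in the non-dicritical case, the strict drop in the dicritical case, the descent $\lambda\mapsto\lambda-1$ for resonant nodes with the height $p+q$ strictly decreasing until $\lambda$ leaves $\Q_{>0}$, and the routine verification of stability under further blow-ups are all correct and standard.

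The gap --- which you partly flag yourself --- is that your terminating invariant does not terminate. The pair $\left(\max_q\nu_q,\ \#\{q:\nu_q\ \text{maximal}\}\right)$ need not strictly decrease: your own blow-up lemma allows exactly one point $q\in E$ with $\nu_q=\nu_p\ge2$, in which case both coordinates of the pair are unchanged, and nothing in the proposal forbids this from recurring along an infinite chain of infinitely near points. Excluding such persistent chains is the actual content of the theorem, and no bookkeeping with $\nu$ alone achieves it; one needs a second, genuinely decreasing quantity. The standard choice is the Milnor number $\mu_p(\F)=\dim_\C \C\{x,y\}/(a,b)$: a chart (or intersection-theoretic) computation gives, for a non-dicritical blow-up, $\sum_{q\in E}\mu_q(\tilde\F)=\mu_p(\F)-(\nu^2-\nu-1)$, so for $\nu\ge2$ the total Milnor number strictly drops at every step, whether or not the multiplicity persists --- this is what replaces your stalled lexicographic descent. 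For $\nu=1$ the correction term is negative and $\mu$ can grow (the cusp $y\partial_x+x^2\partial_y$ has $\mu=2$ while its blow-up carries a point with $\mu=3$), which is precisely why the nilpotent points require the separate analysis you defer to an undefined ``secondary invariant''; making that invariant explicit (Newton-polygon weights, or the case analysis in Seidenberg's paper and in the appendix of \cite{MatteiMoussu}) is unavoidable, since nilpotent chains are exactly where naive multiplicity arguments fail. As written, the proposal is an accurate map of the classical proof, but its two load-bearing steps --- strict decrease for $\nu\ge2$ and termination of the nilpotent chains --- are missing.
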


This allows to reduce the study of singular points to non degenerate models. We note that after reduction
of singularities, all singular points have exactly two invariant curves (one of them might be formal divergent
in the saddle-node case $\lambda=0$) which are smooth and transversal.
We want to prove now a similar result for pairs of foliations.

\begin{theorem}\label{thm:ReductionPair}
Let $(\F_1,\F_2)$ be a pair of holomorphic singular foliations on a compact complex surface $M$,
and $\T$ be the support of $\Tang(\F_1,\F_2)$.
Then there is a proper map $\pi:\tilde M\to M$ obtained by a finite sequence of punctual blowing-up 
such that the pair of lifted foliations $(\tilde\F_1,\tilde\F_2)=\pi^*(\F_1\F_2)$ is, at each point of $\tilde M$,
one of the following types: 
\begin{enumerate}
\item $\T=\emptyset$, both $\F_i$ are regular and transversal to each other;
\item $\T$ is smooth, both $\F_i$ are regular, transversal to $\T$;
\item $\T$ is smooth, both $\F_i$ are regular, tangent to $\T$;
\item $\T$ has a normal crossing, both $\F_i$ are regular, tangent to one component of $\T$, 
and transversal to the other one;
\item $\T$ is smooth, $\F_1$ is regular, $\F_2$ has a reduced singular point, 
and $\T$ is a common leaf/invariant curve of $\F_i$;
\item $\T$ has a normal crossing, both $\F_i$ have a reduced singular point, and $\T$ is 
the common set of invariant curves of $\F_i$; moreover, in case both $\F_i$ are saddle-node,
they share the non zero invariant curve.
\end{enumerate}
\end{theorem}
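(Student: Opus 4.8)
The plan is to upgrade Seidenberg's Theorem from one foliation to the pair, by reducing first the two foliations and then the tangency curve, controlling throughout a decreasing invariant that prevents the blowing-up process from running forever. Everything is local over $M$, and the centres I shall ever blow up form a finite set: the singular points of $\F_1$ and of $\F_2$, together with the finitely many points where $\T$, the separatrices and the exceptional divisor fail to cross normally. Since $M$ is compact and reducedness of each foliation is preserved under further blow-ups by Seidenberg's stability clause, the reduction mechanisms below can be run in succession without undoing one another.

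First I would reduce the foliations: apply Seidenberg's Theorem to $\F_1$, and then, on the resulting surface, to $\F_2$; by the stability clause the singular points of $\tilde\F_1$ stay reduced while those of $\tilde\F_2$ get resolved, so after this modification both $\tilde\F_1$ and $\tilde\F_2$ have only reduced singular points. Each blow-up of a smooth point leaves a foliation reduced, and blowing up a regular point along a leaf produces only a new reduced ($\lambda=-1$) point with the exceptional component as a leaf, so reducedness is never lost. With both foliations reduced, $\Tang(\tilde\F_1,\tilde\F_2)$ is a well-defined divisor whose support $\T$ is a curve through every singular point of each $\tilde\F_i$. I would then perform an embedded resolution of $\T$ together with the separatrices and the exceptional divisor, blowing up the finitely many points where these curves fail to cross normally; the total multiplicity of this curve configuration strictly drops at the blown-up point, giving termination, while reducedness is untouched. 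Every centre is a smooth or a reduced singular point, so the local types created remain reduced.

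It then remains to normalise locally and match the list. At a point $q$ of the final surface I split according to how many $\tilde\F_i$ are singular. If both are regular, either they are transverse, so $q\notin\T$, giving (1); or they are mutually tangent and, after straightening $\tilde\F_1=\ker(dy)$, the support $\T$ is cut out by the $dx$-coefficient of $\omega_2$, which after resolution is a smooth branch transverse to the common direction (case (2)), tangent to it (case (3)), or a normal crossing of one branch of each kind (case (4)). If exactly one foliation, say $\tilde\F_2$, is singular, it is reduced with two separatrices; since a separatrix lies in $\T$ exactly when it is invariant by the regular $\tilde\F_1$, the normal-crossing conclusion forces $\T$ to be the single common smooth separatrix, which is (5). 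If both are singular they are reduced and the branches of $\T$ are precisely their shared invariant curves, giving (6).

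The main obstacle is the last clause of (6): two saddle-nodes meeting at $q$ must share their non-zero (strong) manifold. This is where the local analysis has genuine content, because $\T$ can be a normal crossing of two shared axes even when the strong directions are swapped: for $v_1=x\partial_x+y^{k+1}\partial_y$ and $v_2=x^{m+1}\partial_x+y\partial_y$ one computes $\omega_1\wedge\omega_2=xy(x^my^k-1)\,dx\wedge dy$, so $\T=\{xy=0\}$ is already a normal crossing of common invariant curves, yet the strong manifolds $\{y=0\}$ and $\{x=0\}$ differ. The point to prove is that any configuration with distinct strong manifolds is never terminal: blowing up $q$ turns at least one saddle-node into a saddle when the strong directions are transverse (in the example above, $v_1$ becomes a $\lambda=-1$ saddle while $v_2$ becomes a saddle-node at the $\partial_x$-point), and it strictly lowers the contact of the two strong manifolds when they are tangent. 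Hence a terminal both-saddle-node configuration can occur only when the strong manifolds coincide. Carrying out this blow-up dichotomy explicitly, and checking that the companion points created on the exceptional divisor again fall into cases (1)--(6), is the delicate heart of the argument; the remaining verifications are routine once the decreasing invariant is fixed.
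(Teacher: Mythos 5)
Your overall skeleton coincides with the paper's: apply Seidenberg twice using its stability clause, resolve a curve configuration to normal crossings with a standard multiplicity-drop termination, match the local models branch by branch, and dispose of the oppositely-oriented pair of saddle-nodes by one additional blow-up. Your explicit computation with $v_1=x\partial_x+y^{k+1}\partial_y$ and $v_2=x^{m+1}\partial_x+y\partial_y$ is correct and actually supplies the verification that the paper compresses into ``one check that''. But there is a genuine gap in the middle step: the configuration you resolve --- $\T$ together with the separatrices and the exceptional divisor --- is too small. The paper resolves, at each non-simple point $p$, the germ $\Gamma=\Gamma_1\cup\Gamma_2\cup\T$, where $\Gamma_i$ is the local \emph{formal invariant curve} of $\F_i$ at $p$: this means the leaf through $p$ when $\F_i$ is regular, not only the separatrices at singular points. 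Omitting the leaves makes your case-matching fail. Concretely, take $\F_1=\ker(dy)$ and $\F_2=\ker\bigl(dy+(y-3x^2)dx\bigr)$: both foliations are regular, $\Tang(\F_1,\F_2)=(y-3x^2)$, so $\T=\{y=3x^2\}$ is smooth, there are no singular points and no separatrices, and your procedure performs no blow-up at all. Yet at the origin $\T$ is tangent to both foliations \emph{without being invariant}, so the point is of none of the types (1)--(6): the normal forms listed after the theorem show that ``tangent to $\T$'' in types (3) and (4) means the tangent component of $\T$ lies in a common leaf, which fails here. Your sentence asserting that, after resolution, a smooth branch of $\T$ tangent to the common direction is case (3) is exactly where this breaks: pointwise tangency of $\T$ to the leaves does not imply invariance. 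The cure is the paper's: put $\Gamma_1=\{y=0\}$ and the $\F_2$-leaf $\{y=x^3+o(x^3)\}$ into the configuration --- all three curves are pairwise tangent at $0$, so the embedded resolution does blow up and separates $\T$ from the leaves; afterwards, the paper's two observations that every local invariant curve of the lifted foliations and the whole support of the lifted tangency divisor are contained in the total transform of $\Gamma$ are what legitimize the branch-by-branch matching you attempt.

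Two smaller points in the same direction. First, for the resolution to make sense at saddle-nodes you must resolve the \emph{formal} central manifolds as well (the paper explicitly takes $\Gamma_i$ formal); your multiplicity-drop termination applies verbatim to formal branches, but this needs saying, since a divergent central manifold need not lie in the analytic curve $\T$ --- indeed type (5.3) tolerates a divergent central manifold precisely because it is not a branch of $\T$. Second, your ``tangent strong manifolds'' alternative in the last paragraph is vacuous at a terminal configuration: once $\T$ has normal crossings and its branches are the shared invariant curves, two saddle-nodes with distinct strong manifolds necessarily have them transverse, each branch being strong for one foliation and central for the other; that is exactly the single case the paper removes by one extra blow-up, as your example confirms.
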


Before proving the theorem, let us comment on the possible types of reduced singular points.
First of all, types (1) - (4) admit simple local normal form:
\begin{enumerate}
\item $(\F_1,\F_2)\sim(dy,dx)$ (the generic regular case);
\item $(\F_1,\F_2)\sim(dy,d(y+x^{k+1}))$ and $\T=(x^k)$, $k\in\Z_{>0}$ (see \cite[Lemma 5.1]{Painleve1et2});
\item $(\F_1,\F_2)\sim(dy,d(y+xy^{k}))$ and $\T=(y^k)$, $k\in\Z_{>0}$ (see \cite[Lemma 5]{AsterisqueRamis});
\item $(\F_1,\F_2)\sim(dy,d(y+x^{k+1}y^{l}))$ and $\T=(x^ky^l)$, $k,l\in\Z_{>0}$. 
\end{enumerate}
Generic points of each branch of $\T$ are either of type (2), or of type (3).

Type (5) splits into 3 cases: \newline
(5.1) $(\F_1,\F_2)\sim(dx,xdy-\lambda ydx)$, $\lambda\in\C\setminus(\R_{\le0}\cup\Q_{>0})$, and $\T=(x)$, or\newline
(5.2) $(\F_1,\F_2)\sim(dx,xdy-(\lambda+o(1))ydx)$, $\lambda\in\R_{\le0}$, and $\T=(x)$, or\newline
(5.3)  $(\F_1,\F_2)\sim(dy,xdy-(1+o(1))y^{k+1}dx)$ and $\T=(y^{k+1})$, $k\in\Z_{>0}$;  \newline
Moreover, the classification of the pair up to analytic diffeomorphism is equivalent to the analytic classification of $\F_2$
in each case.
Poincar\'e Linearization Theorem provides linearization in (5.1). The saddle case (5.2) with $\lambda<0$  
corresponds to Mattei-Moussu's Theorem (see Remark \ref{rem:MMpairs}).
The saddle-node case splits into (5.2)  with $\lambda=0$ (see Remark \ref{rem:MMsaddlenode}) 
and (5.3) (see \cite[Proposition 2.2.3]{MartinetRamis}). Moreover, for fixed $\lambda$,
the analytic class of $\F_2$ is characterized by $\lambda$ and the analytic class of its holonomy 
along the invariant curve $y=0$ in (5.2) (Mattei-Moussu's Theorem), or $x=0$ in (5.3) 
(see \cite[Corollaire 3.3]{MartinetRamis}).

So far, only the classification of type (6) was not known.
In this direction, the first author recently obtained (see \cite{TheseArame,PapierPairFol})
the complete classification in the case $\Tang(\F_1,\F_2)$ is reduced 
(i.e. $\T=(xy)$, without multiplicity). We will state the results in Section \ref{Sec:Pairs}.
We note that in case (6), any saddle occuring among $\F_i$ must have analytic
invariant curves, i.e. not formal divergent component, since they are contained in $\T$ 
which is analytic.

The list of Theorem \ref{thm:ReductionPair} is not stable by arbitrary blowing-up:
if we blow-up items (2) or (3) for instance, then we have to blow-up more in order to recover 
a pair with reduced singular points as in the statement. 

\begin{proof}Let $M$, $(\F_1,\F_2)$ and $\T$ as in the statement.
First of all, there exists a proper map $\pi:\tilde M\to M$ obtained by a finite sequence of punctual blowing-up
such that the pull-back $\pi^*\F_i$ have both only reduced singular points. This is done by applying twice 
Seidenberg's Theorem \cite{Seidenberg}, successively for $\F_1$ and $\F_2$. 
These properties are stable by additional blowing-up
and the final map $\pi:\tilde M\to M$ of the statement will be obtained by additional blowing-ups. 
Therefore, we can assume without lost of generality that both $\F_i$ have reduced singular points from the beginning.

Recall that $\T$ is passing through all singular points of $\F_1$ or $\F_2$.
Outside the support of $\T$, the pair is regular, of type (1). Along each irreducible component
of $\T$, the pair is generically of type (2) or (3), depending whether that component is generically transversal
to the $\F_i$'s, or is $\F_i$-invariant; moreover, this generic feature is for a Zariski open set of $\T$. 
We therefore conclude that, apart from a finite set of points in $M$, the situation 
is as in (1), (2) and (3) of the list. So the problem is local: let us consider a point $p$
where $(\F_1,\F_2,\T)$ is not as (1), (2) or (3), and prove that after finitely many 
blowing-up infinitesimally close to $p$, we get only points in the list (1)-(6);
as we shall see, we will get only local models of type (3)-(6) along the exceptional divisor.

In order to show this, let us  consider the local formal invariant curve $\Gamma_i$ of $\F_i$ at $p$:
it consists of one or two smooth and transversal branches depending on whether 
the foliation is regular or singular. Then consider the germ $\Gamma=\Gamma_1\cup\Gamma_2\cup\T$ at $p$;
mind that they can share common branches.
Then we can blow-up until $\tilde\Gamma=\pi^*\Gamma$ has only normal crossing singular points.
We note that all local invariant curves for $\tilde\F_i=\pi^*\F_i$ are contained in $\tilde\Gamma$;
otherwise, any extra invariant curve would descend as an additional invariant curve for $\F_i$
outside $\Gamma$, providing a contradiction. We also note that $\tilde\Gamma$ contains 
the support of $\Tang(\tilde\F_1,\tilde\F_2)$ for a similar reason.
We claim that all points along $\tilde\Gamma$ are of type (3)-(6). Let us check this.

Let us consider first the case where $\tilde\Gamma$ is smooth. Then the two foliations are smooth
(only one invariant curve) with $\tilde\Gamma$ as a common leaf and no other tangency: 
we are in type (3). Now, in order to consider the case $\tilde\Gamma$ has two components,
note that any common component between two among the three curves 
$\tilde\Gamma_i=\pi^*\Gamma_i$, $i=1,2$, and $\tilde\T=\pi^*\T$ 
must be also a component of the third curve. Indeed, if $\tilde\Gamma_1$ and $\tilde\Gamma_2$
have a common component, then $\tilde\F_1$ and $\tilde\F_2$ must be tangent along this branch and 
it must be a component of $\tilde\T$; in a similar way, if $\T$ and $\tilde\Gamma_1$, say, have a common 
component, then that branch of $\tilde\Gamma_1$ must be invariant by $\tilde\F_2$ as well, and be a component
of $\tilde\Gamma_2$. We conclude that, each branch of $\tilde\Gamma$ consists of 
\begin{itemize}
\item either a branch of $\tilde\T$ only, 
\item or a branch of $\tilde\Gamma_i$ only, 
\item or a common branch of $\tilde\Gamma_1$, $\tilde\Gamma_2$ and $\tilde\T$.
\end{itemize}
In the first case above, then $\tilde\F_1$ and $\tilde\F_2$ are regular (only one invariant curve contined in the other branch of $\tilde\Gamma$) 
and transversal to that branch of $\tilde\T$ only:
we are in case (4). In the second case above, say $i=1$, then $\tilde\F_2$ is regular (only one invariant curve)
and transversal to that branch of $\tilde\Gamma_1$, and tangent to the other branch: we are in case (5).
Finally, if all branches are shared by $\tilde\T$, $\tilde\Gamma_1$ and $\tilde\Gamma_2$, then we are in case (6).
%Note that this later case is stable by blowing-up. 
Finally, in case (6),  if ever $\tilde\F_1$ and $\tilde\F_2$ are saddle-nodes oriented in opposite way, i.e. each branch of $\T$ 
is the strong manifold (non zero eigendirection) of one and the central manifold (zero eigendirection)
of the other, then one check that after one additional blowing-up, we get two singular pairs along the exceptional
divisor with only one saddle-node in each pair.

To conclude, applying the above strategy of resolution by blowing-up at each point $p$ in $M$ where 
the pair is not locally of type (1)-(3) yields a global proper map $\pi:\tilde M\to M$ such that 
the lifted pair is locally of type (1)-(6) everywhere.
\end{proof}

\section{Reduced pairs of foliations: a partial classification}\label{Sec:Pairs}

This section is devoted to announcement of results of the first author in her thesis
\cite{TheseArame}. All details will appear in a forthcoming paper \cite{PapierPairFol}.

We consider a local pair $(\F_1,\F_2)$ of type (6) in the list of Theorem \ref{thm:ReductionPair}
and assume that $\Tang(\F_1,\F_2)=(xy)$ is reduced, i.e. without multiplicity.
One easily check that this implies that we can write 
$$\F_i=\ker(\ xdy-(\lambda_i+o(1))ydx)$$
with $\lambda_i\in\C\setminus\Q_{\ge0}$, and with $\lambda_1\not=\lambda_2$ 
(otherwise $\Tang(\F_1,\F_2)$ cannot be reduced). Conversely, any pair $(\F_1,\F_2)$
as above with $\lambda_1\not=\lambda_2$ is of type (6) with reduced tangency along axis.

Denote 
$$\Diff^\times(\C^2,0)=\{\Phi(x,y)=(x a(x,y),y b(x,y))\ ;\ a,b\in\OO^\times(\C^2,0)\}\subset\Diff(\C^2,0)$$
the subgroup of those diffeomorphisms preserving the axis (without permutation) and
$$\F\sim\G\ \ \ \text{or}\ \ \ (\F_1,\F_2)\sim(\G_1,\G_2)$$
when two foliations or two pairs are conjugated by a diffeomorphism $\Phi\in\Diff^\times(\C^2,0)$.

\begin{theorem}[A. A. Diaw \cite{TheseArame,PapierPairFol}]\label{thm:Pair}
 Consider two pairs $(\F_1,\F_2)$ and $(\G_1,\G_2)$ of the form
$$\F_i=\ker(xdy-(\lambda_i+f_i(x,y))ydx)\ \ \ \text{and}\ \ \ \G_i=\ker( xdy-(\lambda_i+g_i(x,y))ydx)$$
with $\lambda_1\not=\lambda_2$, $\lambda_i\in\C\setminus\Q_{\ge 0}$, and $f_i,g_i\in\OO(\C,0)$ vanishing at $0$.
Then we have 
$$(\F_1,\G_1)\sim(\F_2,\G_2)\ \ \ \Leftrightarrow\ \ \ \F_1\sim\F_2\ \text{and}\ \G_1\sim\G_2.$$
\end{theorem}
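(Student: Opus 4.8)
The implication $(\F_1,\F_2)\sim(\G_1,\G_2)\Rightarrow(\F_1\sim\G_1\text{ and }\F_2\sim\G_2)$ is immediate, since a single $\Phi\in\Diff^\times(\C^2,0)$ realizing the pair conjugacy conjugates each foliation separately; the whole content is the converse, which I would prove along the lines of our proof of Theorem \ref{thm:MM}. First I would normalize the first foliation: choosing $\Theta\in\Diff^\times(\C^2,0)$ with $\Theta^*\G_1=\F_1$ and replacing $(\G_1,\G_2)$ by the conjugate pair $(\F_1,\Theta^*\G_2)$, I may assume $\G_1=\F_1$ while keeping $\Theta^*\G_2\sim\G_2\sim\F_2$. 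It then remains to produce a symmetry $\Phi$ of $\F_1$, i.e. $\Phi\in\Diff^\times$ with $\Phi^*\F_1=\F_1$, such that moreover $\Phi^*\G_2=\F_2$.

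The key is to rectify both foliations at once near the punctured common separatrix $\Disc^*=\{(x,0)\ ;\ 0<\vert x\vert<r\}$. There $\F_1$ and $\F_2$ are regular and transverse away from the axes (their tangency is $\T=(xy)$), so the multivalued map $\Theta=(H_1,H_2)$ built from first integrals $H_i$ of $\F_i$ (Section \ref{sec:Hol}) straightens the pair to the two coordinate foliations. Because $H_i$ is a first integral of $\F_i$ alone, its monodromy along the generator of $\pi_1(\Disc^*)$ depends only on $\F_i$; hence the monodromy of $\Theta$ is diagonal, $(H_1,H_2)\mapsto(\chi(H_1),\varphi(H_2))$, while that of the map $\Theta'=(H_1,H_2')$ attached to $(\F_1,\G_2)$ is $(\chi,\psi)$, the first component being unchanged since $\G_1=\F_1$. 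As $\F_2\sim\G_2$, their holonomies, hence the monodromies $\varphi$ and $\psi$, are conjugate: $c\,\varphi\,c^{-1}=\psi$ for some $c\in\Diff(\C,0)$. I would then set
$$\Phi=(\Theta')^{-1}\circ(\mathrm{id},c)\circ\Theta .$$
By construction $\Phi$ fixes the $H_1$-coordinate, hence preserves $\F_1$, and realizes $c$ on the $H_2$-coordinate, hence conjugates $\F_2$ to $\G_2$; the relation $c\,\varphi\,c^{-1}=\psi$ is exactly what makes $\Phi$ single-valued near $\Disc^*$. This is where the hypothesis decouples: the two first integrals furnish two independent transverse coordinates (here $\lambda_1\neq\lambda_2$ is used), so the correction needed for $\F_2$ is carried out without disturbing $\F_1$, in the spirit of Remark \ref{rem:MMpairs} but now with a singular reference foliation.

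So far $\Phi$ is only defined and analytic near $\Disc^*$, and the heart of the matter, where I expect the main difficulty, is to extend it to a germ at $0$; this step replaces Gronwall's inequality. I would mimic the second part of the proof of Theorem \ref{thm:MM}. Using Proposition \ref{prop:PoincareDulacTransForm} I would first produce a transversely formal conjugacy $\hat\Phi\in\Diff^\times$ of the pair along the whole disc $\Disc$, built from the formal first integrals exactly as $\Phi$ was built from the convergent ones; note that a simultaneous \emph{convergent} normalization of the pair is not available, so one genuinely works with the formal first integrals and the extra freedom of moving $x$ by terms in $o(1)$. Then $\hat\phi:=\hat\Phi^{-1}\circ\Phi$ is a transversely formal diffeomorphism along $\Disc^*$ preserving $\F_1$, and by Lemma \ref{lem:SymmetryMM}, applicable since $\lambda_1\in\C\setminus\Q_{\geq0}\subset\C\setminus\Q_{>0}$, it extends as a transversely formal symmetry along the whole of $\Disc$. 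Consequently $\Phi=\hat\Phi\circ\hat\phi$ is at once analytic near $\Disc^*$ and transversely formal on $\Disc$; the Laurent expansions of its coefficients then converge with only non-negative exponents, so they define holomorphic germs at $0$ and extend $\Phi$ to the desired element of $\Diff^\times(\C^2,0)$.

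Two points require care. The common separatrix $\{y=0\}$ is a leaf of both foliations, so $\Theta$ contracts it to the origin and fails to be a local biholomorphism there; this is harmless because the correction $(\mathrm{id},c)$ fixes that point, but it must be tracked when checking that $\Phi$ and $\hat\Phi$ are genuine elements of $\Diff^\times$. Moreover, Lemma \ref{lem:SymmetryMM} is stated for one foliation, so one has to argue that the extended $\hat\phi$ still respects $\F_2$; this is automatic, since $\hat\phi$ is already a symmetry of the full pair on the annulus and its extension is forced by analytic continuation. Finally, when $\lambda_i\notin\R_{\leq0}$ the foliations are analytically linearizable by Poincar\'e's theorem and the argument simplifies; the substantial case is $\lambda_i\in\R_{<0}$, handled uniformly by the rigidity of Lemma \ref{lem:SymmetryMM}.
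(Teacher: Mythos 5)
Your outline reproduces, at the level of strategy, exactly what the paper announces for this theorem---an analytic conjugacy of the pair near $\Disc^*$, a transversely formal conjugacy along the whole disc $\Disc$, and a comparison of the two via an extension lemma for transversely formal symmetries---but note that the paper itself contains no proof of Theorem \ref{thm:Pair} (it is an announcement, with details deferred to \cite{TheseArame,PapierPairFol}), and each of your three substantive steps is asserted rather than proved. The most serious gap is the last step: Lemma \ref{lem:SymmetryMM} is stated and proved only for \emph{fibered} symmetries $(x,y)\mapsto(x,\hat\phi(x,y))$, whereas your $\hat\phi=\hat\Phi^{-1}\circ\Phi$ is a general element of $\Diff^\times$ moving the $x$-variable. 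The fibered hypothesis is not cosmetic: for the linear model $\F_1=\ker(xdy-\lambda ydx)$, the non-fibered map $(x,y)\mapsto(xe^{1/x},ye^{\lambda/x})$ is an analytic symmetry of $\F_1$ along $\Disc^*$ (it pulls $\frac{dy}{y}-\lambda\frac{dx}{x}$ back to itself) which does not extend at $0$; so the extension of $\hat\phi$ must use that it preserves \emph{both} foliations, and the statement you actually need is an extension lemma for transversely formal symmetries of the \emph{pair} along $\Disc^*$---precisely the ``careful study'' the paper attributes to the thesis, and which you would have to prove using $\lambda_1\neq\lambda_2$. Your ``two points require care'' paragraph misdiagnoses this: the issue is not whether the extension respects $\F_2$, but that the lemma you invoke is false for non-fibered symmetries of a single foliation.

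The two earlier steps also have real gaps. First, near $\Disc^*$ the web map $\Theta=(H_1,H_2)$ degenerates along the common separatrix (in the linear model its Jacobian is $(\lambda_2-\lambda_1)x^{-\lambda_1-\lambda_2-1}y$, vanishing on the tangency divisor), so $\Phi=(\Theta')^{-1}\circ(\mathrm{id},c)\circ\Theta$ is defined only off $\{y=0\}$; observing that $(\mathrm{id},c)$ fixes the image point is no argument, since the entire separatrix is crushed to that point, where $(\Theta')^{-1}$ is undefined. You still must prove that $\Phi$ is bounded, hence extends across the punctured separatrix by Riemann's theorem, or instead construct the conjugacy by a path-lifting argument for pairs of regular foliations tangent along a common leaf, as in the first part of the proof of Theorem \ref{thm:MM} and in the references \cite{AsterisqueRamis,OlivierBras} the paper cites for this step. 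Second, the transversely formal conjugacy $\hat\Phi$ of the pair along $\Disc$ cannot be ``built from the formal first integrals exactly as $\Phi$ was built from the convergent ones'': along the full disc there is no transversely formal first integral at all (already $x^{-\lambda_1}y$ is multivalued in $x$), and the degenerate web map cannot be inverted in the transversely formal category. Proposition \ref{prop:PoincareDulacTransForm} normalizes each $\F_i$ separately by two \emph{different} fibered maps; producing a single map handling both is the transversely formal analogue of the decomposition Theorem \ref{thm:DecomposeDiffeo}, i.e.\ the formal half of the very statement under proof, and it requires its own step-by-step argument in which $\lambda_1\neq\lambda_2$ and the reduced tangency enter. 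As it stands, your proposal is a faithful road map of the announced strategy, but all three pillars supporting it remain to be built.
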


In other words, if there are $\Phi_i\in\Diff^\times(\C^2,0)$ such that $\F_i=\Phi_i^*\G_i$ for $i=1,2$,
then there exists a single $\Phi\in\Diff^\times(\C^2,0)$ such that $\F_i=\Phi^*\G_i$.
An equivalent statement is given by

\begin{theorem}[A. A. Diaw \cite{TheseArame,PapierPairFol}]\label{thm:DecomposeDiffeo}
Consider a pair $(\F_1,\F_2)$ as in Theorem \ref{thm:Pair}. 
Then, for any $\Psi\in\Diff^\times(\C^2,0)$, there exist $\Psi_1,\Psi_2\in\Diff^\times(\C^2,0)$
such that 
$$\Psi=\Psi_1\circ\Psi_2\ \ \ \text{and}\ \ \ \Psi_1^*\F_i=\F_i,\ i=1,2.$$
\end{theorem}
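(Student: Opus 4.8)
The plan is to \emph{deduce the statement from Theorem \ref{thm:Pair}}, which is stated earlier and which the paper itself presents as an equivalent formulation; the passage between the two is a purely formal manipulation of conjugacies in $\Diff^\times(\C^2,0)$. Throughout, $\Phi^*\F$ denotes the pulled-back foliation, I use $(\Phi\circ\Psi)^*=\Psi^*\circ\Phi^*$, and I call $\Phi$ a \emph{symmetry} of $\F$ when $\Phi^*\F=\F$. The only geometric input needed beyond Theorem \ref{thm:Pair} is that $\Diff^\times(\C^2,0)$ preserves the coordinate axes: for any $\Psi\in\Diff^\times(\C^2,0)$, the foliation $\Psi^*\F_1$ again has the two axes as separatrices and the same eigenvalue ratio $\lambda_1$, hence can again be written $\ker(xdy-(\lambda_1+g_1)ydx)$ with $g_1(0)=0$. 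Thus the pair $(\Psi^*\F_1,\F_2)$ satisfies the hypotheses of Theorem \ref{thm:Pair} with the very same invariants $(\lambda_1,\lambda_2)$, $\lambda_1\neq\lambda_2$, as $(\F_1,\F_2)$.

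Given $\Psi\in\Diff^\times(\C^2,0)$, I would compare the two pairs $(\F_1,\F_2)$ and $(\G_1,\G_2):=(\Psi^*\F_1,\F_2)$. They are conjugate \emph{component by component}: one has $\F_1=\Phi_1^*\G_1$ with $\Phi_1=\Psi^{-1}$ (since $(\Psi\circ\Psi^{-1})^*\F_1=\F_1$), and $\F_2=\Phi_2^*\G_2$ with $\Phi_2=\mathrm{id}$. By the conclusion of Theorem \ref{thm:Pair} there is then a \emph{single} $\Phi\in\Diff^\times(\C^2,0)$ with $\F_1=\Phi^*\G_1$ and $\F_2=\Phi^*\G_2$, that is
$$\F_1=\Phi^*\Psi^*\F_1=(\Psi\circ\Phi)^*\F_1,\qquad \F_2=\Phi^*\F_2.$$
The second equality says $\Phi$ is a symmetry of $\F_2$, and the first says $\Psi\circ\Phi$ is a symmetry of $\F_1$. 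Setting $\Psi_1:=\Psi\circ\Phi$ and $\Psi_2:=\Phi^{-1}$ gives $\Psi_1\circ\Psi_2=\Psi$ together with $\Psi_1^*\F_1=\F_1$ and $\Psi_2^*\F_2=\F_2$ (the latter because $\Phi^*\F_2=\F_2$ forces $(\Phi^{-1})^*\F_2=\F_2$). This is exactly the asserted decomposition, and all factors lie in $\Diff^\times(\C^2,0)$.

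This reduces everything to Theorem \ref{thm:Pair}, so \emph{the real obstacle is that theorem}, not the bookkeeping above; the present statement is only its group-theoretic shadow. If instead one wanted a self-contained proof of the decomposition, I would try to build $\Psi_1$ directly, seeking a symmetry of $\F_1$ for which $\Psi_1^{-1}\circ\Psi$ preserves $\F_2$, and solving for it order by order in a transversely formal expansion along the separatrix: at each order this is a linear cohomological equation whose solvability is guaranteed precisely by the nonresonance $\lambda_1\neq\lambda_2$ (the two foliations being generically transverse). The delicate point, exactly as in Mattei-Moussu's Theorem, would then be \emph{convergence} — showing that the transversely formal symmetries so produced extend analytically, which is the content of Lemma \ref{lem:SymmetryMM} and the comparison technique of Section \ref{Sec:MM}. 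I expect this convergence step, rather than the formal construction, to be the crux.
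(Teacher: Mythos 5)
Your proof is correct and follows essentially the same route as the paper, which likewise deduces Theorem \ref{thm:DecomposeDiffeo} from Theorem \ref{thm:Pair} by applying the latter to the pair obtained by pulling back one foliation by $\Psi$ (the paper uses $(\F_1,\Psi^*\F_2)$ where you use $(\Psi^*\F_1,\F_2)$, an immaterial swap). If anything, your bookkeeping is the more careful: the paper's factors $\Psi_1=\Phi_1$ and $\Psi_2=\Psi\circ\Phi_1^{-1}$ compose to $\Psi$ only in the order $\Psi_2\circ\Psi_1$, whereas your choice $\Psi_1=\Psi\circ\Phi$, $\Psi_2=\Phi^{-1}$ yields exactly $\Psi=\Psi_1\circ\Psi_2$ with $\Psi_1^*\F_1=\F_1$ and $\Psi_2^*\F_2=\F_2$ --- which is the intended reading of the statement's condition (the printed ``$\Psi_1^*\F_i=\F_i$, $i=1,2$'' is evidently a typo for $\Psi_i^*\F_i=\F_i$, as the paper's own proof of the equivalence confirms).
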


\begin{proof}[Proof of the equivalence between the two statements]
Assume we are two pairs $(\F_1,\F_2)$ and $(\G_1,\G_2)$ as in Theorem \ref{thm:Pair}
and $\Phi_i\in\Diff^\times(\C^2,0)$ such that $\F_i=\Phi_i^*\G_i$. Then we can apply
Theorem \ref{thm:DecomposeDiffeo} to decompose $\Psi:=\Phi_1^{-1}\circ\Phi_2=\Psi_1\circ\Psi_2$
and check that $\Phi:=\Phi_1\circ\Psi_1=\Phi_2\circ\Psi_2^{-1}$ is conjugating
the pairs $(\F_1,\F_2)$ to $(\G_1,\G_2)$. Conversely, given $\Psi\in\Diff^\times(\C^2,0)$,
we can apply Theorem \ref{thm:Pair} to produce conjugacy of pairs
$$(\F_1,\Psi^*\F_2)\stackrel{\Phi_1}{\longrightarrow}(\F_1,\F_2)$$
so that we have a conjugacy
$$(\F_1,\F_2)\stackrel{\Psi\circ\Phi_1^{-1}}{\longrightarrow}(\Psi_*\F_1,\F_2).$$
Then we have decomposition $\Psi=\Psi_1\circ\Psi_2$ where
$\Psi_1=\Phi_1$ preserves $\F_1$, and $\Psi_2=\Psi\circ\Phi_1^{-1}$ preserves $\F_2$.
\end{proof}

The technics involved to prove Theorem \ref{thm:Pair} use local study of 
a pair of regular foliations along a common leaf, as was already done in 
\cite{AsterisqueRamis,OlivierBras,FredOlivier,OlivierGenre2}; this allows to construct conjugacy 
between of pairs of foliations at the neighborhood of a punctured disc $\Disc^*$ contained in the 
invariant curve.
On the other hand, one can prove a version of the decomposition in Theorem \ref{thm:DecomposeDiffeo}
in the  transversely formal setting along the complete disc $\Disc$, implying a transversely formal conjugacy 
between the pairs of foliations. 
Then a careful study of transversely formal symmetries of a pair along $\Disc^*$ allow the first author to 
conclude to the extension of the conjugacy at $0$, likely as in our proof of Mattei-Moussu's Theorem.

We expect that it will be possible to obtain the complete classification even for non reduced
tangency divisors, but there will be additional invariants in that case, and more complicated 
statement. The reduced case is motivated by the study of Hilbert Modular Surfaces which
admit a pair of foliations with reduced tangency divisor (see \cite{ErwanFred}).

\end{document}